\numberwithin{equation}{section}
\newcounter{balance}
\newcommand{\VC}[1]{\text{VC}(#1)}
\let\vec\mathbf
\newtheorem{theorem}{Theorem}[section]
\newtheorem{lemma}[theorem]{Lemma}
\newtheorem{remark}[theorem]{Remark}
\newtheorem{definition}[theorem]{Definition}
\newtheorem{corollary}[theorem]{Corollary}
\newtheorem{example}[theorem]{Example}
\newcommand{\sgn}{\mbox{sgn}}
\newcommand{\seq}{\subseteq}
\newcommand{\ra}{\rightarrow}
\newcommand{\beq}{\begin{equation}}
\newcommand{\eeq}{\end{equation}}
\newcommand{\eset}{\emptyset}
\newcommand{\ol}{\overline}
\newcommand{\nats}{\mathbbm{N}}
\newcommand{\VCD}{\mathrm{VC}}
\newcommand{\VD}{\mathrm{Vdim}}
\newcommand{\PD}{\mathrm{Pdim}}
\newcommand{\psidim}{\mathrm{\Phi}}
\newcommand{\Pfat}[1]{\mathrm{P_{#1}}}
\newcommand{\Vfat}[1]{\mathrm{V_{#1}}}
\newcommand\reals{\mathbbm{R}}
\title{Primal and Dual Combinatorial Dimensions}
\author{
Pieter Kleer\\
Tilburg University\\
Tilburg, The Netherlands\\
\texttt{p.s.kleer@tilburguniversity.edu}
\and
Hans Simon\\ 
Max Planck Institute for Informatics\\
Saarland Informatics Campus (SIC)\\
Saarbr\"ucken, Germany\\
\texttt{hsimon@mpi-inf.mpg.de}
}
\begin{document}

\maketitle

\begin{abstract}%
\noindent We give tight bounds on the relation between the primal and dual  of various combinatorial dimensions, such as the pseudo-dimension and fat-shattering dimension, for multi-valued function classes. These dimensional notions play an important role in the area of learning theory. We first review some (folklore) results that bound the dual dimension of a function class in terms of its primal, and after that give (almost) matching lower bounds. In particular, we give an appropriate generalization to multi-valued function classes of a well-known bound due to Assouad (1983), that relates the primal and dual VC-dimension of a binary function class.
\end{abstract}

\section{Introduction}\label{sec:intro}
The Vapnik-Chervonenkis (VC) dimension \cite{vapnik1971uniform} is a fundamental combinatorial dimension in learning theory used to characterize the complexity of \emph{learning} a class $X$ consisting of functions $f: Y \rightarrow \{0,1\}$ where $X$ and $Y$ are given (possibly infinite) sets. Informally, the VC-dimension captures how rich or complex a class of functions is. Many extensions of the VC-dimension to multi-valued functions $f: X \rightarrow Z$, for some given $Z \subseteq \reals$, have been proposed in the literature, such as the Vapnik-dimension (also known as the uniform pseudo-dimension) \cite{Vapnik1989}, the Pollard-dimension (also known as pseudo-dimension) \cite{Pollard1990,Haussler1992}, and the fat-shattering dimension \cite{Kearns1994}. All these combinatorial dimensions are formally defined in Section \ref{sec:definitions}.

Every (primal) class of functions can be identified with a \emph{dual class} whose functions are of the form $g_y : X \ra Z$ for $y \in Y$ defined by $g_y(f) = f(y)$ for $f \in X$. When interpreting a function class as a matrix $A$ whose rows and columns are indexed by $X$ and $Y$, respectively, the dual class is simply given by the transpose matrix $A^\top$. The (VC, pseudo-, etc..) dimension  of the dual class is defined as the dimension of the matrix $A^T$. 

Assouad \cite{Assouad1983} showed the following relation between the primal VC-dimension $\VCD(A)$ and the dual VC-dimension $\VCD^*(A)$:
\begin{equation}
\VCD^*(A) \leq 2^{\VCD(A) + 1}-1.
\label{eq:assouad_intro}
\end{equation}
This has turned out to be a very useful inequality, e.g.,  in the context of so-called sample compression schemes \cite{Moran2016}. In case $\VCD^*(A)$ is a power of two, this immediately yields $\VCD^*(A) \leq 2^{\VCD(A)}$. It is known that this bound is tight for all values of $\VCD^*(A)$, see, e.g., \cite{Matouvsek2002}.\\

\noindent The purpose of this work is to understand the relation between the primal and dual of combinatorial dimensions for multi-valued function classes, in particular, for multi-valued functions where $Z = \{0,1,\dots,k\}$ for $k \in \mathbb{N}$. For the pseudo-dimension, as explained in Section \ref{sec:known}, it can be shown that
\begin{equation}
\PD^*(A) \leq k \cdot \left(2^{\PD(A) + 1}-1\right),
\end{equation}
which naturally generalizes Assouad's bound in \eqref{eq:assouad_intro}.\footnote{We refer to this as a folklore result, rather than a contribution of this work.} Again, when $\PD^*(A)$ is a power of two, this yields  
\begin{equation}
\PD^*(A) \leq k \cdot 2^{\PD(A)}.
\label{eq:dualpdim_intro}
\end{equation}
Our first contribution is that the bound in \eqref{eq:dualpdim_intro} is in fact tight for every value of $k$ and $\PD(A)$ (Theorem \ref{th:dualpdim-lb}). In case $\PD(A) = 1$, we give an improved bound of $k + 2$ (Theorem \ref{th:dualpdim-ub}), and also show that this is tight (Theorem \ref{th:dualpdim-lb}).
We obtain similar bounds for the fat-shattering dimension (Theorem \ref{th:dualfat-lb}).
\begin{remark}
It is sometimes believed that Assouad's bound also holds for combinatorial dimensions beyond the VC-dimension, see, e.g., \cite{hanneke2019sample}. Our results show that this is, unfortunately, not correct.
\end{remark}

\noindent \textit{Outline.} 
We continue in Section \ref{sec:definitions} with all the necessary definitions and notations, in particular the formal definitions of all combinatorial dimensions considered in this work. Then, in Section \ref{sec:known}, we outline known results regarding the relations between various combinatorial dimensions and their duals. After that, in Section \ref{sec:results}, we summarize our results, followed by their proofs in Section \ref{sec:proofs}.

\section{Preliminaries} \label{sec:definitions}

For $k\ge1$, we set $[k] := \{1,\ldots,k\}$ and $[k]_0 := [k] \cup \{0\}$.
Let $X$ and $Y$ be disjoint sets and let $Z \seq \reals$ be a subset of the reals. 
Consider a function  $A: X \times Y \ra Z$. For $x \in X$, 
we define $A_x: Y \ra Z$ by $A_x(y) = A(x,y)$ and refer to $A_x$ as a \emph{row} 
of $A$. For $y \in Y$, we define $A_y: X \ra Z$ by $A_y(x) = A(x,y)$ and refer 
to $A_y$ as a \emph{column} of $A$. The \emph{transpose} of $A$ is defined
as the function $A^\top: Y \times X \ra Z$ given by $A^\top(y,x) = A(x,y)$.
As suggested by this terminolgy, we view $A$ as a (possibly infinite) matrix 
with rows indexed by $X$, columns indexed by $Y$ and with $A^\top$ as its
transpose. 

A matrix $A: X \times Y \ra Z$ with $Z = \{0,1\}$ is said to be \emph{Boolean}.
Let $d \ge 1$ be a positive integer. We denote by $B_d: X \times Y \ra \{0,1\}$ 
the Boolean matrix which is defined as follows:
\begin{enumerate}
\item $X = [2^d]$ and $Y = [d]$.
\item 
For every function $b: [d] \ra \{0,1\}$, there exists an $x \in [2^d]$ 
such that, for every $y \in [d]$, we have $B_d(x,y) = b(y)$.  
\end{enumerate}
Note that $B_d$ is unique modulo renaming rows and columns.

\begin{definition}[Shattered sets] \label{def:shattered-sets}
Let $A: X \times Y \ra Z$, with $Z \subseteq \mathbb{R}$, be a matrix and let $J \seq Y$ be a subset of its columns. 
\begin{enumerate}
\item
Suppose that $Z = \{0,1\}$.
We say that $J$ is \emph{VC-shattered} by $A$ if, 
for every function $b: J \ra \{0,1\}$, there exists an $x \in X$ such that, 
for every $y \in J$, we have $B(x,y) = b(y)$.
\item 
We say that $J$ is \emph{P-shattered} by $A$ if
there exists a function $\vec{t}: J \ra \reals$ such that the following holds:
for every function $b: J \ra \{0,1\}$, there exists an $x \in X$ such that, 
for every $y \in J$, we have $A(x,y) \ge \vec{t}(y)$ iff $b(y)=1$. 
\item
Let $\gamma>0$. We say that $J$ is \emph{$P_\gamma$-shattered} by $A$ 
if there exists a function $\vec{t}: J \ra \reals$ such that the following holds:
for every function $b: J \ra \{0,1\}$, there exists an $x \in X$ such that, 
for every $y \in J$, we have 
\[
A(x,y) \left\{ \begin{array}{ll}
         \ge \vec{t}(y)+\gamma & \mbox{if $b(y)=1$} \\
         <   \vec{t}(y)-\gamma & \mbox{if $b(y)=0$}
       \end{array} \right . \enspace .
\] 
\item
We say that $J$ is \emph{V-shattered} by $A$ if there exists a number $t\in \reals$ 
such that the following holds: for every function $b: J \ra \{0,1\}$, there exists 
an $x \in X$ such that, for every $y \in J$, we have $A(x,y) \ge t$ iff $b(y)=1$. 
\item
Let $\gamma>0$. We say that $J$ is \emph{$V_\gamma$-shattered} by $A$ 
if there exists a number $t \in \reals$ such that the following holds:
for every function $b: J \ra \{0,1\}$, there exists an $x \in X$ such that, 
for every $y \in J$, we have 
\[
A(x,y) \left\{ \begin{array}{ll}
         \ge t+\gamma & \mbox{if $b(y)=1$} \\
         <   t-\gamma & \mbox{if $b(y)=0$}
       \end{array} \right . \enspace .
\] 
\end{enumerate}
We will refer to $\vec{t}: J \ra \reals$ occuring in the definition of 
$P$- and the $P_\gamma$-shattered sets as the \emph{thresholds used for 
shattering $J$}. Similarly, we will refer to $t \in \reals$ occuring 
in the definition of $V$- and the $V_\gamma$-shattered sets as the 
\emph{uniform threshold used for shattering~$J$}.
\end{definition}

\begin{definition}[Combinatorial dimensions]
Let $A: X \times Y \ra Z$ be a matrix.
Let $\tau \in \{\mathrm{VC},P,P_\gamma,V,V_\gamma\}$ be one of the
shattering types mentioned in Definition~\ref{def:shattered-sets}.
The \emph{(primal) $\tau$-dimension} of $A$ is the size of a 
largest set $J \seq Y$ that is $\tau$-shattered by $A$ (resp.~$\infty$
if there exist $\tau$-shatterable sets of unbounded size). 
The \emph{dual $\tau$-dimension} of $A$ is defined as the $\tau$-dimension 
of $A^\top$. 
\end{definition}
We use the notations $\VCD(A)$, $\PD(A)$, $\Pfat{\gamma}(A)$, $\VD(A)$
and $\Vfat{\gamma}(A)$ for the (primal) dimensions 
of type $\tau = \mathrm{VC},P,P_\gamma,V,V_\gamma$, respectively. Here, $\VCD(A)$ is the VC-dimension \cite{vapnik1971uniform}, $\PD(A)$ the pseudo-dimension \cite{Pollard1990,Haussler1992}, $\Pfat{\gamma}(A)$ the fat-shattering dimension \cite{Kearns1994}, $\VD(A)$ the Vapnik-dimension \cite{Vapnik1989} and $\Vfat{\gamma}(A)$ the fat-shattered version of the Vapnik-dimension, see, e.g., \cite{Alon1997}.
The corresponding dual dimensions are denoted
by  $\VCD^*(A)$, $\PD^*(A)$, $P_{\gamma}^*(A)$, $\VD^*(A)$ 
and $V_{\gamma}^*(A)$, respectively. 

The matrix \emph{obtained by thresholding the columns of $A: X \times Y \ra Z$
at $\vec{t}: Y \ra \reals$} is defined 
as the Boolean matrix $B: X \times Y \ra \{0,1\}$ such that, for all $x \in X$
and $y \in Y$, we have $B(x,y) = 1$ iff $A(x,y) \ge \vec{t}(y)$.
For $I \seq X$ and $J \seq Y$, we denote the restriction of $A$ to $I \times J$
by $A_{I,J}$. In other words: $A_{I,J}$ is the submatrix of $A$ whose rows
are indexed by $I$ and whose columns are indexed by $J$.
A \emph{witness for the inequality $\PD(A) \ge d$} is defined
as a tripel $(I,J,\vec{t})$ such that the following holds:
\begin{enumerate}
\item 
$I$ is a subset of $X$ of size $2^d$, $J$ is a subset of $Y$ of size $d$
and $\vec{t}: J \ra \reals$.
\item 
Every pattern $b: J \ra \{0,1\}$ occurs in exactly one row of the Boolean matrix
obtained by thresholding the columns of $A_{I,J}$ at $\vec{t}$, i.e., $A_{I,J}$ equals $B_d$ up to permutation of its rows.
\end{enumerate}

\begin{remark} \label{rem:multi-class labels}
Let $k \ge 1$ be a positive integer. 
Consider a matrix $A: X \times Y \ra [k]_0$. It is easy to see that,
if a set $J \seq Y$ can be $P$-shattered by $A$ with 
thresholds $\vec{t}:J \ra \reals$, then it can also be $P$-shattered 
with (suitably chosen) thresholds $\vec{t}:J \ra [k]$. 
An analogous remark applies to $V$-shattering with a uniform threshold $t$.
\end{remark}
When analyzing the $P$- or the $V$-dimension of a matrix with entries 
in $[k]_0$, we will assume that thresholds are taken from $[k]$ whenever 
we find that convenient.

\section{Known relations}
\label{sec:known}
In this section we review some known relations between the 
combinatorial dimensions defined in Section \ref{sec:definitions}.

\subsection{Bounding P- in terms of V-dimension}
It follows directly from the definitions that
\begin{equation}
\VD(A) \le \PD(A)\ \mbox{ and } \Vfat{\gamma}(A) \le \Pfat{\gamma}(A).
\end{equation}
This raises the question whether we can bound the $P$- in terms
of the $V$-dimension (resp.~the $P_\gamma$ in terms of the $V_\gamma$-dimension).
The gap between $\PD(A)$ and $\VD(A)$ cannot be bounded in general, 
as the following well-known example shows.
\begin{example} \label{ex:monotone-functions}
Let $X$ be the set of all monotone\footnote{A function $f : [0,1] \rightarrow [0,1]$ is monotone if $f(x) \leq f(y)$ for all $x \leq y$.} functions from $[0,1]$ to $[0,1]$, $Y = [0,1]$
and $A(x,y) = x(y)$ for $x \in X$. Then $\VD(A) = 1$ and $\PD(A) = \infty$.
\end{example}
In order to bound the $P$- in terms of the $V$-dimension, 
the focus will therefore be on matrices of the form $A: X \times Y \ra [k]_0$.
According to the following results of Ben-David et al.~\cite{BenDavid1995} (here expressed 
in our notation), the $P$- can exceed the $V$-dimension by factor $k$, 
but not by a larger factor\footnote{See \cite[Theorem 7-8]{BenDavid1995} and the 
proof of \cite[Theorem 7]{BenDavid1995}.}:

\begin{theorem}[\cite{BenDavid1995}] \label{th:pdim-ub}
For every matrix $A: X \times Y \ra [k]_0$,
we have 
\begin{equation} \label{eq:pdim-ub}
\PD(A) \le k \cdot \VD(A). 
\end{equation}
\end{theorem}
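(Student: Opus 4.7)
The plan is to prove the bound via a simple pigeonhole argument on the thresholds appearing in a witness for the $P$-dimension. Let $d = \PD(A)$ and pick a set $J \seq Y$ with $|J| = d$ that is $P$-shattered by $A$. Using Remark~\ref{rem:multi-class labels}, I may assume without loss of generality that the associated threshold function takes values in $[k]$, i.e., $\vec{t}: J \ra [k]$. Since only $k$ threshold values are available and $d$ columns must be assigned thresholds, the pigeonhole principle yields some $t^* \in [k]$ for which
\[
J' := \{\, y \in J : \vec{t}(y) = t^* \,\}
\]
has size at least $d/k$.

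Next I would verify that $J'$ is $V$-shattered by $A$ with the uniform threshold $t^*$. Given any target pattern $b: J' \ra \{0,1\}$, extend it arbitrarily to a pattern $b': J \ra \{0,1\}$ on all of $J$. Because $J$ is $P$-shattered with thresholds $\vec{t}$, there exists $x \in X$ such that $A(x,y) \ge \vec{t}(y)$ iff $b'(y) = 1$ for every $y \in J$. Restricting to $y \in J'$, where $\vec{t}(y) = t^*$, we obtain $A(x,y) \ge t^*$ iff $b(y) = 1$, which is exactly the $V$-shattering condition with uniform threshold $t^*$. Consequently, $\VD(A) \ge |J'| \ge d/k$, and rearranging gives $d \le k \cdot \VD(A)$. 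Since this holds for every finite $d \le \PD(A)$ (and the argument pushes $\VD(A)$ to $\infty$ whenever $\PD(A)$ is infinite), the bound $\PD(A) \le k \cdot \VD(A)$ follows.

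I do not expect any serious obstacle in this proof; the entire argument is a one-shot pigeonhole reduction. The only subtle point is invoking Remark~\ref{rem:multi-class labels} to ensure the thresholds live in the discrete set $[k]$ rather than an arbitrary subset of $\reals$, which is what allows the pigeonhole step to produce a linear-in-$d$ lower bound on $|J'|$. Without this reduction to integer thresholds one would have to group columns according to which half-open interval of $\reals$ their thresholds fall into, but the remark makes this step immediate.
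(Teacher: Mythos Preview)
Your proposal is correct and follows exactly the standard pigeonhole-on-thresholds argument. The paper does not supply its own proof of this theorem (it is cited from \cite{BenDavid1995}), but the identical reasoning---reduce to finitely many threshold values, apply pigeonhole to find a popular threshold, and observe that the corresponding columns are $V$-shattered---is precisely what the paper spells out in its proof of the analogous Theorem~\ref{th:fat-ub}.
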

\begin{theorem}[\cite{BenDavid1995}] \label{th:pdim-lb}
For every $d\ge1$ and every $k\ge1$, there exists a 
matrix $A: X \times Y \ra [k]_0$ such that 
\[ \VD(A) = d\ \mbox{ and }\ \PD(A) = k \cdot d. \]
\end{theorem}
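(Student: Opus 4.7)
The plan is to exhibit an explicit matrix whose columns each have a two-element staircase support, arranged so that private per-column thresholds yield full pseudo-shattering while any single uniform threshold activates only one ``level'' of columns at a time. This structurally forces the factor-$k$ gap between $\PD$ and $\VD$.

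Concretely, take $X = \{0,1\}^{kd}$, $Y = [d] \times [k]$, and define
\[
A(\vec{x}, (i,j)) = (j-1) + x_{(i,j)},
\]
so the $(i,j)$-column takes values only in $\{j-1, j\} \seq [k]_0$. For $\PD(A) \ge kd$, threshold column $(i,j)$ at $\vec{t}(i,j) = j$: the column reaches $j$ iff $x_{(i,j)} = 1$, and since the coordinates $x_{(i,j)}$ are independent, all $2^{kd}$ patterns on $Y$ appear as rows. For $\VD(A) \ge d$, fix any $j_0 \in [k]$ and take $J = \{(i, j_0) : i \in [d]\}$ with uniform threshold $t = j_0$: the $i$-th column of $J$ fires iff $x_{(i, j_0)} = 1$, realizing all $2^d$ patterns on $J$. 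The matching bound $\PD(A) \le k \cdot \VD(A) = kd$ then follows from Theorem~\ref{th:pdim-ub}.

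The main work is the upper bound $\VD(A) \le d$. For any uniform threshold $t$ and any V-shattered set $J$, each column in $J$ must be non-constant after thresholding, which for a column of support $\{j-1, j\}$ forces $t \in (j-1, j]$. This interval condition pins down $j$ as the unique $j^* \in [k]$ with $t \in (j^*-1, j^*]$ (and no such $j^*$ exists when $t \le 0$ or $t > k$). Hence $J \seq \{(i, j^*) : i \in [d]\}$, giving $|J| \le d$.

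The only real design idea, and the main obstacle in finding the construction, is arranging the column supports so that the admissible uniform-threshold intervals for different $j$-levels are pairwise disjoint. Once the consecutive-integer staircase $\{j-1, j\}$ is in place, the rest is elementary: independence of the coordinates $x_{(i,j)}$ handles both lower bounds, and the disjoint-intervals observation handles the $\VD$ upper bound.
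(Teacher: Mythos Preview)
Your construction is identical to the paper's $B_D$-based matrix with $k$ equal column blocks (Lemma~\ref{lem:shatter-matrix-primal} and Corollary~\ref{cor:pdim-lb}), and your proofs of the dimension bounds follow the same outline. The only cosmetic difference is in the $\VD(A)\le d$ step: you observe that the admissible uniform-threshold intervals $(j-1,j]$ are pairwise disjoint, whereas the paper phrases this as block monotonicity ($A(x,y_1)\le A(x,y_2)$ across blocks), but these are two equivalent readings of the same staircase structure.
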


\smallskip\noindent
Alon et al.~\cite{Alon1997} have bounded $P_\gamma$- in terms of the $V_{\gamma/2}$-dimension.

\begin{theorem}[\cite{Alon1997}]  \label{th:fat-ub}
For every matrix $A: X \times Y \ra [0,1]$ and every $0 < \gamma \le 1/2$,
we have\footnote{In~\cite{BenDavid1995}, one finds a factor $2 \lceil 1/(2\gamma) \rceil$
at the place of factor $\lceil 1/\gamma \rceil$. We find the latter (and slightly
smaller) factor preferable because of its simpler form.}
\begin{equation} \label{eq:fat-ub}
\Pfat{\gamma}(A) \le
\left(\left\lceil\frac{1}{\gamma}\right\rceil - 1\right) \cdot \Vfat{\gamma/2}(A) \le
\left(\left\lceil\frac{1}{\gamma}\right\rceil - 1\right) \cdot  \PD(A)
\enspace .
\end{equation}
\end{theorem}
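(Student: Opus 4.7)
The plan is to establish the two inequalities in \eqref{eq:fat-ub} separately. The rightmost inequality $\Vfat{\gamma/2}(A) \le \PD(A)$ is immediate: if $J \seq Y$ is $V_{\gamma/2}$-shattered by $A$ with uniform threshold $t$, then the constant threshold function $\vec{t}(y) := t$ witnesses that $J$ is also $P$-shattered, because $A(x,y) \ge t + \gamma/2$ implies $A(x,y) \ge t$ and $A(x,y) < t - \gamma/2$ implies $A(x,y) < t$.

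For the first (and main) inequality, the plan is a pigeonhole argument that, starting from a $P_\gamma$-shattered set $J \seq Y$ with threshold function $\vec{t}: J \ra \reals$, extracts a subset $J' \seq J$ of size at least $|J| / (\lceil 1/\gamma \rceil - 1)$ which is $V_{\gamma/2}$-shattered by $A$. First I would observe that, because $A$ takes values in $[0,1]$ and both $P_\gamma$-conditions $A(x,y) \ge \vec{t}(y) + \gamma$ and $A(x,y) < \vec{t}(y) - \gamma$ must be realizable in rows of $A$, one may assume $\vec{t}(y) \in (\gamma, 1-\gamma]$ for every $y \in J$. I would then cover this feasible range by the $N := \lceil 1/\gamma \rceil - 1$ closed intervals $I_j := [\,j\gamma - \gamma/2,\; j\gamma + \gamma/2\,]$ for $j = 1, \ldots, N$, each of length $\gamma$, verifying by a short boundary calculation that their union $[\gamma/2, \lceil 1/\gamma \rceil \gamma - \gamma/2]$ really does contain $(\gamma, 1-\gamma]$ for every admissible $\gamma \in (0, 1/2]$.

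By pigeonhole, there exists some $j^* \in \{1, \ldots, N\}$ such that $J' := \{y \in J : \vec{t}(y) \in I_{j^*}\}$ has size at least $|J|/N$. Setting the uniform threshold $t^* := j^* \gamma$, we have $|t^* - \vec{t}(y)| \le \gamma/2$ for all $y \in J'$, which yields both $\vec{t}(y) + \gamma \ge t^* + \gamma/2$ and $\vec{t}(y) - \gamma \le t^* - \gamma/2$. Now fix any pattern $b': J' \ra \{0,1\}$, extend it arbitrarily to some $b: J \ra \{0,1\}$, and pick a row $x$ of $A$ that $P_\gamma$-realizes $b$ (such a row exists by the $P_\gamma$-shattering of $J$). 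Restricting $x$ to $J'$ then certifies $A(x,y) \ge t^* + \gamma/2$ whenever $b'(y)=1$ and $A(x,y) < t^* - \gamma/2$ whenever $b'(y)=0$, so $J'$ is $V_{\gamma/2}$-shattered with uniform threshold $t^*$. Hence $|J| \le N \cdot |J'| \le (\lceil 1/\gamma \rceil - 1) \cdot \Vfat{\gamma/2}(A)$, and taking $J$ of maximum size yields the first inequality.

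The only slightly delicate point is the covering step: a naive cover of $[0,1]$ by $\lceil 1/\gamma \rceil$ unit-$\gamma$ intervals would only give the weaker factor $\lceil 1/\gamma \rceil$, and the sharper "$-1$" hinges on the feasibility restriction $\vec{t}(y) > \gamma$, which makes the bottom interval centered near $0$ unnecessary. Everything else is standard pigeonhole bookkeeping and direct inequality checking.
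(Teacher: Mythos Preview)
Your proof is correct and follows essentially the same pigeonhole argument as the paper: the paper rounds each threshold to the nearest multiple of $\gamma$ (losing at most $\gamma/2$ in the shattering margin) and then pigeonholes over the $\lceil 1/\gamma\rceil - 1$ possible rounded values, which is exactly your covering by the intervals $I_j = [j\gamma - \gamma/2,\; j\gamma + \gamma/2]$ centered at those multiples. Your write-up is slightly more explicit about the feasibility range $(\gamma,1-\gamma]$ and the covering verification, but the two arguments are the same.
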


\begin{proof}
The thresholds $t_1,\ldots,t_d$ used for $P_\gamma$-shattering $d := \Pfat{\gamma}(A)$ 
many columns of $A$ must belong to the interval $[\gamma,1-\gamma]$. Any  
threshold $t_i$ can be rounded to the closest multiple of $\gamma$.
Denote the latter by $\hat t_i$. The inequality~(\ref{eq:fat-ub}) becomes now
evident from the following observations. First, by using the thresholds $\hat t_i$ 
instead of $t_i$, the width of shattering may drop from $\gamma$ to $\gamma/2$
(but not beyond). Second, $\hat t_1,\ldots,\hat t_d$ can take on 
at most 
\[ 
r := \left\lceil \frac{1-2\gamma}{\gamma} \right\rceil + 1 = 
\left\lceil\frac{1}{\gamma}\right\rceil - 1\
\]
different values. By the pidgeon-hole principle, there is 
some $t \in \{\hat t_1,\ldots,\hat t_d\}$ that can be used 
for $V_{\gamma/2}$-shattering $d/r$ many points. 
\end{proof}

\subsection{Bounding dual dimension in terms of its primal}
A well-known result due to Assouad \cite{Assouad1983} already mentioned in Section \ref{sec:intro}, which we will refer to as \emph{Assouad's bound}, states that one can upper bound the dual VC-dimension in terms of the (primal) VC-dimension.

\begin{theorem}[\cite{Assouad1983}] \label{th:assouad}
For every matrix $A: X \times Y \ra \{0,1\}$, we have
\begin{equation} \label{eq:assouad}
\VCD^*(A) \le 2^{\VCD(A)+1}-1.
\end{equation}
\end{theorem}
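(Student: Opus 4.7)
The plan is to show that $\VCD^*(A) \ge m$ forces $\VCD(A) \ge \lfloor \log_2 m \rfloor$, which rearranges to $m \le 2^{\VCD(A)+1} - 1$.

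The first step will be to unpack the definition of dual shattering. If $I = \{x_1,\ldots,x_m\} \seq X$ is VC-shattered by $A^\top$, then for every $b \in \{0,1\}^m$ there is a column $y_b \in Y$ with $A(x_i, y_b) = b_i$ for every $i \in [m]$. Distinct patterns force distinct columns, so the submatrix $A' := A_{I,\{y_b\}_b}$ is an $m \times 2^m$ Boolean matrix whose columns enumerate $\{0,1\}^m$ exactly once; equivalently, the $i$-th row of $A'$ is the coordinate projection $\pi_i: \{0,1\}^m \ra \{0,1\}$, $b \mapsto b_i$. This is the key structural observation.

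The second step is to VC-shatter $k := \lfloor \log_2 m \rfloor$ columns of $A'$ by an explicit counting construction. A set $J = \{b^{(1)}, \ldots, b^{(k)}\} \seq \{0,1\}^m$ of $k$ columns is shattered by $A'$ precisely when the $m \times k$ matrix $M$ with $M_{ij} = (b^{(j)})_i$ contains every element of $\{0,1\}^k$ among its rows. Since $2^k \le m$, I will fix an injection $\phi: \{0,1\}^k \ra [m]$ and set $(b^{(j)})_{\phi(c)} := c_j$ for every $c \in \{0,1\}^k$ and $j \in [k]$, filling the remaining coordinates arbitrarily; then row $\phi(c)$ of $M$ equals $c$, all $2^k$ patterns appear, and $J$ is VC-shattered by $A'$.

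Combining the two steps and using the monotonicity $\VCD(A) \ge \VCD(A')$ (any column set shattered by a submatrix is also shattered by $A$, since the shattering witnesses $x_i$ still live in $X$) yields $\VCD(A) \ge \lfloor \log_2 m \rfloor$, equivalently $m \le 2^{\VCD(A)+1}-1$. Applying this inequality with $m = \VCD^*(A)$ when this is finite, and with every finite $m \le \VCD^*(A)$ otherwise (which forces $\VCD(A) = \infty$ and makes the bound vacuous), completes the argument. I do not expect a real obstacle: the entire content is the recognition that a dual shattering witness automatically produces an all-patterns submatrix whose rows are coordinate projections, after which the primal shattering of $\lfloor \log_2 m \rfloor$ columns is an elementary pigeonhole-style construction.
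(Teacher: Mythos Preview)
Your proof is correct and follows essentially the same approach as the paper. The paper's argument (sketched in Appendix~\ref{app:assouad}) is the one-line observation that $B_d$ contains $B_{\lfloor \log d \rfloor}^\top$ as a submatrix, hence $\VCD(A) \ge \lfloor \log \VCD^*(A) \rfloor$; your two steps spell out exactly this containment---Step~1 identifies the submatrix $A' = B_m^\top$ arising from a dual shattering witness, and Step~2 explicitly exhibits $B_{\lfloor \log_2 m \rfloor}$ inside $A'$ via the injection $\phi$.
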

\noindent Note that, under the assumption that $\VCD^*(A)$ is a power of two,
this means
\begin{equation}
\VCD^*(A) \le 2^{\VCD(A)}.
\label{eq:assouad_power}
\end{equation}
The bound in \eqref{eq:assouad_power} is known to be tight for every value of $\VC{A}$, see, e.g., \cite{Matouvsek2002}.

In Appendix \ref{app:assouad} we show that the Assouad's  bound also holds for $\VD(A)$ and $V_{\gamma}(A)$, based on the notion of \emph{uniform $\Psi$-dimension} as defined in \cite{Alon1997}. These observations are summarized in the following result.

\begin{corollary}[Folklore] \label{cor:assouad}
For every matrix $A: X \times Y \ra [0,1]$, we have
\begin{equation} \label{eq:assouad-general}
\VD^*(A) \le 2^{\VD(A)+1}-1\ \mbox{ and }\ 
V_{\gamma}^*(A) \le 2^{\Vfat{\gamma}(A)+1}-1.
\end{equation}
If $\VD^*(A)$, respectively $V_{\gamma}^*(A)$, is a power of two, this means
\begin{equation} \label{eq:assouad-general-power}
\VD^*(A) \le 2^{\VD(A)}\ \mbox{ and }\ 
V_{\gamma}^*(A) \le 2^{\Vfat{\gamma}(A)} .
\end{equation}
\end{corollary}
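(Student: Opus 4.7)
The plan is to reduce both inequalities in \eqref{eq:assouad-general} to the original Assouad bound (Theorem~\ref{th:assouad}) via a single template parametrised by a partial-Boolean labelling family; this is the uniform $\Psi$-dimension viewpoint of \cite{Alon1997}. For the $V$-case, the labelling is $\psi_t(v) = \Ind[v \ge t]$ (always total); for the $V_\gamma$-case, $\psi_t(v) = 1$ if $v \ge t+\gamma$, $\psi_t(v) = 0$ if $v < t-\gamma$, and $\psi_t(v)$ is undefined otherwise. In both cases a uniform threshold $t$ witnesses $V$- (resp.~$V_\gamma$-) shattering of $J$ precisely when, for every $b:J \ra \{0,1\}$, there is a row $x$ with $\psi_t(A(x,y)) = b(y)$ for all $y \in J$.

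Set $d^* := \VD^*(A)$ (resp.~$V_\gamma^*(A)$) and fix a witness: a row-set $J^* \seq X$ of size $d^*$, a uniform threshold $t^*$, and columns $y_b \in Y$ for every $b : J^* \ra \{0,1\}$ such that $\psi_{t^*}(A(x, y_b)) = b(x)$ for all $x \in J^*$. I would then work with the finite Boolean matrix $M$ on $J^* \times \{y_b : b \in \{0,1\}^{J^*}\}$ defined by $M(x, y_b) := b(x)$. Note that $M$ is genuinely Boolean: in the $V_\gamma$-case the margin condition on the witness prevents $\psi_{t^*}$ from being undefined on this rectangle. By construction, the column $y_b$ of $M$ realises the pattern $b$ on $J^*$, so $\VCD^*(M) = d^*$.

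Applying Theorem~\ref{th:assouad} to $M$ yields $d^* \le 2^{\VCD(M)+1}-1$. Since $n \mapsto 2^{n+1}-1$ is monotone, it suffices to verify $\VCD(M) \le \VD(A)$ (resp.~$\le \Vfat{\gamma}(A)$). Let $K \seq \{y_b\}$ be VC-shattered by $M$. For each $b' : K \ra \{0,1\}$, VC-shattering supplies a row $x \in J^* \seq X$ with $M(x,y) = b'(y)$, equivalently $\psi_{t^*}(A(x,y)) = b'(y)$, for all $y \in K$. That is exactly the definition of $K$ being $V$- (resp.~$V_\gamma$-) shattered by $A$ with uniform threshold $t^*$, so $|K| \le \VD(A)$ (resp.~$\le \Vfat{\gamma}(A)$). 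This establishes \eqref{eq:assouad-general}, and the power-of-two consequence \eqref{eq:assouad-general-power} follows by the same elementary arithmetic used in the pure VC case.

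The conceptual hurdle, and the reason the uniform $\Psi$-framework is worth invoking, lies in the $V_\gamma$-case: one cannot simply pointwise-binarise $A$ at $t^*$ and apply Assouad directly, because the converse implication ``a set VC-shattered by the binarisation is $V_\gamma$-shattered by $A$'' would fail once the margin is destroyed outside the witness rectangle. The restriction to $J^* \times \{y_b\}$ sidesteps this: the margin is baked into the definition of the witness, and it transfers automatically to the extracted VC-shattered subcolumn set because its witnessing rows all lie in $J^*$. Every other step is a routine translation between primal and dual viewpoints.
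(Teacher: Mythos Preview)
Your proof is correct and follows essentially the same route as the paper (Appendix~\ref{app:assouad}): reduce to Assouad's bound via a single uniform substitution $\psi_{t^*}$, exploiting that a set VC-shattered in the substituted matrix is automatically $V$- (resp.\ $V_\gamma$-) shattered in $A$ with the \emph{same} threshold $t^*$. The only cosmetic difference concerns how the $V_\gamma$-margin is handled: the paper applies $\psi_{t^*}$ globally to obtain a $\{0,1,*\}$-valued matrix and invokes the easy extension of Assouad's bound to such partial matrices, whereas you restrict to the finite witness rectangle $J^* \times \{y_b\}$, where the margin is guaranteed by construction, so the matrix is genuinely Boolean and the original Theorem~\ref{th:assouad} applies directly.
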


Combining Theorem~\ref{th:pdim-ub} (applied to $A^\top$)
with Corollary~\ref{cor:assouad}, we directly obtain the following result:

\begin{theorem}[Folklore] \label{cor:dualpdim-ub}
For every matrix $A: X \times Y \ra [k]_0$, the following holds:
\begin{enumerate}
\item
$\PD^*(A) \le k \cdot \left(2^{\VD(A)+1}-1\right) \le k \cdot \left(2^{\PD(A)+1}-1\right)$.
\item
If $\VD^*(A)$ is a power of two, then $\PD^*(A) \le k \cdot 2^{\VD(A)} \leq k \cdot 2^{\PD(A)}$.
\end{enumerate}
\end{theorem}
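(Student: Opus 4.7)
The plan is to follow the explicit hint in the paper: combine Theorem~\ref{th:pdim-ub}, applied to the transpose $A^\top$, with Corollary~\ref{cor:assouad}. The key observation is that the transpose $A^\top: Y \times X \ra [k]_0$ still has entries in $[k]_0$, so the bound $\PD(\cdot) \le k \cdot \VD(\cdot)$ of Theorem~\ref{th:pdim-ub} applies to it. By definition, $\PD(A^\top) = \PD^*(A)$ and $\VD(A^\top) = \VD^*(A)$, hence
\[
\PD^*(A) \le k \cdot \VD^*(A).
\]

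For item~1, I would then invoke the general Assouad-type bound from Corollary~\ref{cor:assouad} to replace $\VD^*(A)$ by $2^{\VD(A)+1}-1$, obtaining
\[
\PD^*(A) \le k \cdot \left(2^{\VD(A)+1}-1\right).
\]
The remaining inequality $2^{\VD(A)+1}-1 \le 2^{\PD(A)+1}-1$ follows from the trivial observation $\VD(A) \le \PD(A)$, which is immediate from the definitions of the two shattering notions (a $V$-shattered set is, in particular, $P$-shattered, namely with a constant threshold function).

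For item~2, I would use the second half of Corollary~\ref{cor:assouad}: if $\VD^*(A)$ happens to be a power of two, then $\VD^*(A) \le 2^{\VD(A)}$. Plugging this into the inequality $\PD^*(A) \le k \cdot \VD^*(A)$ obtained above, together with $\VD(A) \le \PD(A)$, yields
\[
\PD^*(A) \le k \cdot 2^{\VD(A)} \le k \cdot 2^{\PD(A)},
\]
which is the claimed bound.

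There is essentially no real obstacle here; the only thing to be careful about is the bookkeeping, namely verifying that applying Theorem~\ref{th:pdim-ub} to $A^\top$ is legitimate (the range condition $[k]_0$ is preserved under transposition), and that $\VD(A) \le \PD(A)$ can indeed be used to weaken both the exponential and the power-of-two bound uniformly. Once these two observations are in place, the proof is a one-line composition of the two cited results.
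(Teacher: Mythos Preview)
Your proposal is correct and is exactly the argument the paper has in mind: apply Theorem~\ref{th:pdim-ub} to $A^\top$ to get $\PD^*(A)\le k\cdot \VD^*(A)$, then invoke Corollary~\ref{cor:assouad} (in its general and power-of-two forms) and the trivial inequality $\VD(A)\le \PD(A)$. There is nothing to add.
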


\noindent Similarly, combining Theorem~\ref{th:fat-ub} 
with Corollary~\ref{cor:assouad}, we directly obtain the following result.   \newpage

\begin{corollary}[Folklore] \label{cor:dualfat-ub}
For every matrix $A: X \times Y \ra [0,1]$, the following holds:
$$
P_{\gamma}^*(A) \le 
\left(\left\lceil\frac{1}{\gamma}\right\rceil-1\right) \cdot
\left(2^{\Vfat{\gamma/2}(A)+1} - 1\right) 
\le 
\left(\left\lceil\frac{1}{\gamma}\right\rceil-1\right) \cdot
\left(2^{\PD(A)+1} - 1\right).
$$
\end{corollary}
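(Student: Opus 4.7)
The plan is to combine the two previously established results in the natural way suggested by the sentence preceding the statement. First I would apply Theorem~\ref{th:fat-ub} to the transpose $A^\top$, which is again a $[0,1]$-valued matrix with rows indexed by $Y$ and columns indexed by $X$. This gives
\[
\Pfat{\gamma}(A^\top) \le \left(\left\lceil\tfrac{1}{\gamma}\right\rceil - 1\right) \cdot \Vfat{\gamma/2}(A^\top),
\]
which, by definition of the dual dimensions, is precisely
\[
P_{\gamma}^*(A) \le \left(\left\lceil\tfrac{1}{\gamma}\right\rceil - 1\right) \cdot V_{\gamma/2}^*(A).
\]

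Next I would invoke Corollary~\ref{cor:assouad} with parameter $\gamma/2$ in place of $\gamma$ to bound $V_{\gamma/2}^*(A) \le 2^{\Vfat{\gamma/2}(A)+1} - 1$. Substituting this into the previous display yields the first inequality of the statement.

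For the second inequality, I would use the monotonicity chain $\Vfat{\gamma/2}(A) \le \Pfat{\gamma/2}(A) \le \PD(A)$. The first step holds because any set $V_{\gamma/2}$-shattered with a uniform threshold $t$ is automatically $P_{\gamma/2}$-shattered by the constant threshold function $\vec{t} \equiv t$ (this inequality is already recorded at the beginning of Section~\ref{sec:known}). The second step holds because a $P_{\gamma/2}$-shattering is, a fortiori, a $P$-shattering with the same thresholds. Since $2^{x+1}-1$ is monotone in $x$, replacing the exponent $\Vfat{\gamma/2}(A)$ by the larger quantity $\PD(A)$ only weakens the bound, giving the second claimed inequality.

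I do not expect a real obstacle: the corollary is essentially a substitution exercise, which is why the statement is labelled folklore. The only points requiring care are (i) verifying that Theorem~\ref{th:fat-ub} applies to $A^\top$, which it does since $A^\top$ inherits the $[0,1]$-valued codomain of $A$ and the theorem places no restriction on the index sets, and (ii) the parameter substitution $\gamma \mapsto \gamma/2$ when invoking Corollary~\ref{cor:assouad}, which is legal because the corollary is stated uniformly for every positive scale parameter.
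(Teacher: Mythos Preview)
Your proposal is correct and matches the paper's own derivation: the paper simply says the result follows by combining Theorem~\ref{th:fat-ub} (applied to $A^\top$) with Corollary~\ref{cor:assouad}, and you have spelled out precisely that substitution, including the parameter shift $\gamma \mapsto \gamma/2$ and the monotonicity $\Vfat{\gamma/2}(A)\le\PD(A)$ for the second inequality.
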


\section{Our results} \label{sec:results}
In this section we describe our new contributions, that complement those mentioned in Section \ref{sec:known}. We first discuss results related to the pseudo-dimension.
We start with a result showing that the upper bound on $\PD^*(A)$ 
in Theorem~\ref{cor:dualpdim-ub} can be improved by a factor $2$ 
(roughly) for matrices $A$ with $\VD(A) = 1$. 

\begin{theorem} \label{th:dualpdim-ub}
Let $A: X \times Y \ra [k]_0$ with $k\ge1$ be a matrix with $\VD(A) = 1$. Then $\PD^*(A) \le k+2$.
\end{theorem}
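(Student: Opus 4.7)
The plan is to assume $\PD^*(A) \ge d$ and argue that $d \le k+2$. By Remark \ref{rem:multi-class labels} I may take a $P$-shattered set $I \subseteq X$ of size $d$ witnessed by thresholds $t_i \in [k]$ for $x_i \in I$, and for each pattern $b \in \{0,1\}^I$ I fix a column $y_b \in Y$ with $A(x_i, y_b) \ge t_i$ iff $b_i = 1$. I partition $I$ by threshold as $I_t = \{x_i \in I : t_i = t\}$, so $d = \sum_t |I_t|$, and set $T = \{t \in [k] : I_t \ne \emptyset\}$ with $t_{\min} = \min T$, $t_{\max} = \max T$.

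My first step is to observe that $|I_t| \le 3$ for each $t$. For any $b: I_t \to \{0,1\}$, extend $b$ arbitrarily to $\tilde b \in \{0,1\}^I$; the column $y_{\tilde b}$ realizes $b$ on $I_t$ at uniform threshold $t$, so $I_t$ is $V$-shattered by $A^\top$, giving $|I_t| \le \VD^*(A) \le 2^{\VD(A)+1}-1 = 3$ by Corollary \ref{cor:assouad}. Next I would record the key ``forcing'' property used throughout: for any two columns $y_\sigma, y_\tau$ and any $t^* \in [k]$, a row $x_i \in I$ contributes to the pair the exact pattern $(\sigma_i, \tau_i)$ when $t_i = t^*$, it forces $(1,1)$ whenever $t_i > t^*$ and $\sigma_i = \tau_i = 1$ (since then $A(x_i, y_\sigma), A(x_i, y_\tau) \ge t_i > t^*$), and it forces $(0,0)$ whenever $t_i < t^*$ and $\sigma_i = \tau_i = 0$ (since then both values are $< t_i \le t^*$). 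The strategy is to assemble, for some $t^*$, four rows of $I$ producing the four patterns of $\{0,1\}^2$, which $V$-shatters the pair $(y_\sigma, y_\tau)$ and contradicts $\VD(A) = 1$.

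Now assume for contradiction $d \ge k+3$ and split into two cases. Case~1 (some $|I_{t^*}| = 3$): since $d > 3$ there is $t' \in T \setminus \{t^*\}$, and by symmetry I may take $t' > t^*$. Pick $x_{i_1}, x_{i_2}, x_{i_3} \in I_{t^*}$ and $x_j \in I_{t'}$, and choose $\sigma, \tau \in \{0,1\}^I$ so that $(\sigma, \tau)$ restricted to $(i_1, i_2, i_3, j)$ is $(0,0), (0,1), (1,0), (1,1)$, with arbitrary values elsewhere. At threshold $t^*$ the three rows in $I_{t^*}$ give exactly $(0,0), (0,1), (1,0)$, and $x_j$ forces $(1,1)$, producing the desired $V$-shattering. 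Case~2 ($|I_t| \le 2$ for all $t$): if some $t^* \in T$ has $|I_{t^*}| = 2$ with $t_{\min} < t^* < t_{\max}$, pick $t' < t^* < t''$ in $T$, rows $x_{i_1}, x_{i_2} \in I_{t^*}$, $x_j \in I_{t'}$, $x_\ell \in I_{t''}$, and assign $(\sigma, \tau)$-values $(1,0), (0,1), (0,0), (1,1)$; then $x_{i_1}, x_{i_2}$ give $(1,0), (0,1)$ exactly, $x_j$ forces $(0,0)$, and $x_\ell$ forces $(1,1)$, again $V$-shattering the pair. The only remaining possibility is that every $t \in T$ with $|I_t| = 2$ lies in $\{t_{\min}, t_{\max}\}$; then at most two thresholds contribute two rows and the rest (in $T$) contribute at most one, so $d \le 2 \cdot 2 + (|T| - 2) = |T| + 2 \le k+2$, contradicting $d \ge k+3$.

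The main obstacle I anticipate is isolating the correct case split: the forcing argument succeeds whenever $I$ contains either a threshold of multiplicity three or a ``middle'' threshold of multiplicity two, and the leftover ``all doubles at the extremes $t_{\min}, t_{\max}$'' configuration---which no single $t^*$ lets me surround with rows on both sides---has to be eliminated by the combinatorial counting bound $|T| + 2 \le k + 2$ rather than by a direct shattering contradiction.
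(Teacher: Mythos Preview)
Your argument is correct and follows essentially the same route as the paper's proof: partition the shattered set $I$ by threshold value, then exhibit two columns $y_\sigma, y_\tau$ of $A$ that are $V$-shattered at some uniform threshold $t^*$ by choosing four rows---two (or three) from the block $I_{t^*}$ itself and the remaining ones forced to $(0,0)$ or $(1,1)$ from a block with smaller or larger threshold. The only organisational differences are that the paper treats a block of size $\ge 4$ as a separate case (your preliminary bound $|I_t|\le 3$ via Corollary~\ref{cor:assouad} is exactly the contrapositive of this), and the paper splits the size-$3$ case into ``first/last block'' while handling an inner block of size $\ge 2$ uniformly; your split into ``some block of size $3$'' versus ``some interior block of size $2$'' is equivalent and perhaps slightly cleaner.
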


\smallskip
The next result implies that the upper bound on $\PD^*(A)$ in the second statement of
Theorem~\ref{cor:dualpdim-ub} is tight for matrices with $\VD(A) \geq 2$, as well as the upper bound on $\PD^*(A)$ in Theorem~\ref{th:dualpdim-ub} whenever $\VD(A) \geq 1$.

\begin{theorem} \label{th:dualpdim-lb} The following two lower bounds hold:
\begin{enumerate}
\item
For every $d\ge2$ and every $k\ge1$, there exists a matrix $A: X \times Y \ra [k]_0$
such that 
\[ 
\PD(A) = d\ ,\ \VD^*(A) = 2^d\ 
\mbox{ and }\ \PD^*(A) = k \cdot 2^d. 
\]
\item
For every $k\ge1$, there exists a matrix $A: X \times Y \ra [k]_0$
such that $\VD(A) = \PD(A) = 1$ and $\PD^*(A) = k+2$.
\end{enumerate}
\end{theorem}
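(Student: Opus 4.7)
The plan is to exhibit explicit matrices for both parts, using slightly different constructions suited to the range of $d$.

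For Part~1, I would take $X = [2^d] \times [k]$ and $Y = \{0,1\}^X$ (each column $y$ is an arbitrary Boolean function on $X$), and define
\[
A((x,\ell), y) = \begin{cases} \ell & \text{if } y(x,\ell) = 1, \\ \ell - 1 & \text{if } y(x,\ell) = 0, \end{cases}
\]
so that row $(x,\ell)$ has entries in $\{\ell - 1, \ell\}$. The dual pseudo-shattering is immediate: with per-row thresholds $\vec{t}(x,\ell) = \ell$, every pattern $b \in \{0,1\}^X$ is realized by taking $y = b$, giving $\PD^*(A) \ge k \cdot 2^d$. The statement $\VD^*(A) = 2^d$ is equally short: under a single threshold $t$, any row $(x,\ell)$ with $\ell \ne t$ has its bit forced (to $0$ when $\ell < t$, to $1$ when $\ell > t$), so any $V$-shattered set must lie inside $\{(x,t) : x \in [2^d]\}$, a set of size exactly $2^d$ that is itself $V$-shattered by the obvious choice of columns.

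The crux of Part~1 is $\PD(A) \le d$. Suppose $m$ columns $y_1, \ldots, y_m$ are $P$-shattered with thresholds $t_1, \ldots, t_m \in [k]$. If $t_i < t_j$ for some $i, j$, I would check that the pattern with $b_i = 0$ and $b_j = 1$ is unrealizable: for any candidate row $(x,\ell)$, a three-way case split on the relation of $\ell$ to $t_i$ and to $t_j$ forces an inconsistent bit at one of the two positions. Hence all thresholds coincide at some common value $t$, and the realizable patterns are at most the $2^d$ patterns produced at level $t$, together with the two uniform patterns $(0,\ldots,0)$ and $(1,\ldots,1)$ coming from levels below and above $t$. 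This yields $2^m \le 2^d + 2$, which forces $m \le d$ precisely when $d \ge 2$. Combining this with the upper bound $\PD^*(A) \le k \cdot 2^{\VD(A)}$ from Theorem~\ref{cor:dualpdim-ub} (applicable since $\VD^*(A) = 2^d$ is a power of two) then pins down $\PD(A) = d$ and $\PD^*(A) = k \cdot 2^d$.

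For Part~2, I would let $X = \{x_1, \ldots, x_{k+2}\}$ and $Y = \{0,1\}^X$, and set $A(x_i, y) = t_i$ if $y_i = 1$ and $A(x_i, y) = t_i - 1$ otherwise, where the thresholds are arranged so that exactly two rows use threshold $1$, exactly two use threshold $k$, and each intermediate value in $\{2, \ldots, k-1\}$ is used by exactly one row (with the natural collapses when $k \in \{1,2\}$). The dual side again gives $\PD^*(A) \ge |X| = k+2$ via $y = b$, and the reverse inequality is trivial. The main task is $\PD(A) \le 1$: for any two columns $y, y'$ with thresholds $s, s'$, the case $s \ne s'$ is ruled out by the same forced-bit argument as in Part~1. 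In the case $s = s'$, partitioning the rows into $S^- = \{i : t_i < s\}$, $S = \{i : t_i = s\}$, $S^+ = \{i : t_i > s\}$ shows that $S^\pm$ contribute only the uniform patterns, so the remaining two or three patterns must come from $S$. The designed threshold distribution guarantees $|S| = 1$ at every interior $s$ (insufficient to supply both $(0,1)$ and $(1,0)$) and $|S| = 2$ at the boundary $s \in \{1,k\}$, where one of $S^\pm$ is empty so $S$ would have to supply three of the four patterns. Finally, $\VD(A) = 1$ follows from $\VD(A) \le \PD(A) = 1$ together with the trivial lower bound coming from any non-constant column.

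The main obstacle I anticipate is the bookkeeping in Part~2: verifying that the threshold pattern $(1, 1, 2, 3, \ldots, k-1, k, k)$ is tight precisely because of the boundary phenomenon at $s \in \{1, k\}$, where the absence of $S^-$ (or $S^+$) forces a third pattern to come from a block of size two. This is what pins down $k+2$ as the extremal value rather than $k+1$ or $k+3$, and it is the one place where the clean forced-bits dichotomy of Part~1 does not directly transplant.
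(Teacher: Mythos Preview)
Your proposal is correct, and the constructions you use are in fact the same matrices as the paper's (the paper calls them the $B_D^\top$-based matrices with row blocks of sizes $2^d,\ldots,2^d$ for Part~1 and $2,1,\ldots,1,2$ for Part~2; your parametrization $X=[2^d]\times[k]$ with $A((x,\ell),y)\in\{\ell-1,\ell\}$ is just a relabelling of rows and columns).

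Where you genuinely diverge from the paper is in the verification of the primal bound $\PD(A)\le d$ (resp.\ $\le 1$). The paper argues by contradiction: assuming a witness $(I,J,\vec t)$ for $\PD(A)\ge d+1$, it observes that the thresholded submatrix must equal $B_{d+1}$ and therefore satisfy certain ``balance conditions'' (each column has equally many zeros and ones, each pair of columns realizes each of the four patterns equally often), and then runs a fairly intricate case analysis on how the $2^{d+1}$ witness rows are distributed across the $k$ row blocks to contradict one of these conditions. Your argument is more direct: you first show that any two distinct thresholds $t_i<t_j$ already kill the pattern $(b_i,b_j)=(0,1)$ by the forced-bit trichotomy on $\ell$, hence all thresholds coincide at some $t$; then you simply count that the rows at level $t$ produce at most $2^d$ patterns while the other levels contribute only the two constant patterns, giving $2^m\le 2^d+2$. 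The same two-step scheme handles Part~2 with the boundary analysis at $s\in\{1,k\}$ you describe. This is shorter and more transparent than the paper's balance-condition machinery.

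One small remark: the paper actually proves the stronger statement $\PD(\dot A)\le d$, where $\dot A$ is $A$ augmented by an all-zeros row; this is what it needs downstream for the fat-shattering lower bounds (Theorem~\ref{th:dualfat-lb}) via the merge operation. For Theorem~\ref{th:dualpdim-lb} itself your version suffices, and your argument extends to $\dot A$ with no change (the extra all-zeros row just contributes the constant-zero pattern, already accounted for in your count).
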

In combination with the a technical tool defined in Section \ref{subsec:tools}, we also obtain the following corollary. It stands in stark contrast to Assouad's bound for the VC-dimension.

\begin{corollary}
There exist a matrix $A : X \times Y \rightarrow [0,1]$, such that $\PD(A) = 1$ and $\PD^*(A) = \infty$.
\label{cor:infty}
\end{corollary}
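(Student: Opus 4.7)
The plan is to use the matrices from Theorem~\ref{th:dualpdim-lb}(2) as building blocks, normalize their entries to $[0,1]$, and glue them together via the technical tool of Section~\ref{subsec:tools}.

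For each $k \ge 1$, Theorem~\ref{th:dualpdim-lb}(2) supplies a matrix $A_k : X_k \times Y_k \ra [k]_0$ with $\PD(A_k) = 1$ and $\PD^*(A_k) = k+2$; choose the index sets $X_k, Y_k$ to be pairwise disjoint. Rescaling every entry by $1/k$ yields $\tilde A_k : X_k \times Y_k \ra [0,1]$, and an easy check shows that $P$-shattering is invariant under affine rescaling of the entries (the thresholds rescale together with the entries). Hence $\PD(\tilde A_k) = 1$ and $\PD^*(\tilde A_k) = k+2$ still hold.

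I then invoke the technical tool of Section~\ref{subsec:tools}, which is designed precisely to combine a family of $[0,1]$-valued matrices into a single $[0,1]$-valued matrix so that each one is retained as a submatrix, dual shattering witnesses carry over, and the primal pseudo-dimension of the composite is controlled. Applied to the family $(\tilde A_k)_{k \ge 1}$, the tool produces $A : X \times Y \ra [0,1]$ with $X = \bigsqcup_{k \ge 1} X_k$ and $Y = \bigsqcup_{k \ge 1} Y_k$ such that $A_{X_k,Y_k} = \tilde A_k$, such that $\PD(A) \le 1$, and such that every $J \seq X_k$ that is $P$-shattered by $\tilde A_k^\top$ (using columns in $Y_k$) remains $P$-shattered by $A^\top$ using the same columns from $Y_k$.

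The corollary then follows immediately: $\PD^*(A) \ge \PD^*(\tilde A_k) = k+2$ for every $k$, hence $\PD^*(A) = \infty$, while $\PD(A) \le 1$ combined with $\PD(A) \ge \PD(\tilde A_1) = 1$ gives $\PD(A) = 1$. The principal obstacle, which the technical tool from Section~\ref{subsec:tools} is built to overcome, is the selection of the off-block entries of $A$: two columns from different blocks $Y_k, Y_{k'}$ must be prevented from jointly realizing all four Boolean patterns, and two columns within the same block $Y_k$ must be prevented from acquiring -- via rows in $X \setminus X_k$ -- the one pattern that is missing from their within-block shattering (which exists since $\PD(\tilde A_k) = 1$). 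Without such a careful gluing mechanism, a naive disjoint-block union would in general fail to keep $\PD$ bounded by $1$.
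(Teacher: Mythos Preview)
Your plan is essentially the paper's own proof: take the matrices $A_k$ from Theorem~\ref{th:dualpdim-lb}(2), rescale, and apply the merge construction (Definition~\ref{def:merge}) together with Lemma~\ref{lem:merge} to glue them into a single $[0,1]$-valued matrix; the paper's proof is just the two-line version of this.

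One point worth making explicit: the hypothesis of Lemma~\ref{lem:merge}(2) is $\sup_k \PD(\dot{A}_k) \le 1$, i.e., you need the pseudo-dimension to stay $1$ even after adjoining an all-zeros row. Theorem~\ref{th:dualpdim-lb}(2) as stated only asserts $\PD(A_k)=1$; it is Lemma~\ref{lem2:shatter-matrix-dual} (the lemma used to prove that theorem) that actually gives $\PD(\dot{A}_k)=1$. You clearly see the issue --- your final paragraph is exactly the observation that the off-block zero rows could in principle supply a missing pattern --- so when you write the proof out, cite Lemma~\ref{lem2:shatter-matrix-dual} for the $\dot{A}_k$ bound rather than Theorem~\ref{th:dualpdim-lb}(2).
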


We next move to our results for the fat-shattering dimensions.
The first result here implies that upper bound on $\Pfat{\gamma}(A)$ from 
Theorem~\ref{th:fat-ub} is tight up to a small constant factor:
 
\begin{theorem} \label{th:fat-lb}
For every $d\ge1$, there exists a matrix $A: X \times Y \ra [0,1]$
such that $\PD(A) = d$ and, for all $k\ge1$, 
\[ \Pfat{1/(2k)}(A) \ge k \cdot d \enspace . \]
\end{theorem}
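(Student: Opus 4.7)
The plan is to scale the construction behind Theorem~\ref{th:pdim-lb} into $[0,1]$-valued entries. For the given $d$ and any fixed $k$, that theorem supplies a matrix $B: X_k \times Y_k \ra [k]_0$ with $\VD(B)=d$ and $\PD(B)=kd$; in particular, a $P$-shattered $J\seq Y_k$ of size $kd$ is witnessed by integer thresholds $\vec{t}:J\ra[k]$ (Remark~\ref{rem:multi-class labels}). Setting $A_k(x,y):=B(x,y)/k$ produces a matrix with values in $\{0,1/k,\ldots,1\}\seq[0,1]$ that inherits every combinatorial dimension from $B$, since uniformly scaling entries and thresholds by $1/k$ preserves shattering.

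Shifting each rescaled threshold $\vec{t}(y)/k$ to the midpoint $\vec{t}'(y):=(2\vec{t}(y)-1)/(2k)$ of two adjacent grid values gives $A_k(x_b,y)\ge \vec{t}'(y)+1/(2k)$ when $b(y)=1$ and $A_k(x_b,y)\le \vec{t}'(y)-1/(2k)$ when $b(y)=0$ for every pattern $b:J\ra\{0,1\}$ and the corresponding witness row $x_b$ of the original $P$-shattering of $B$: on the 1-side this is immediate from $B(x_b,y)\ge \vec{t}(y)$; on the 0-side it follows from $B(x_b,y)\le \vec{t}(y)-1$ by integer arithmetic in $B$. Up to the strict ``$<$'' in Definition~\ref{def:shattered-sets}(3) on the $b(y)=0$ side---which I would absorb by perturbing low-side entries downward by an arbitrarily small $\varepsilon>0$ after first compressing the grid into $[\varepsilon,1-\varepsilon]$---this certifies $\Pfat{1/(2k)}(A_k)\ge kd$.

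To promote this per-scale bound to a single matrix $A$ satisfying the claim for every $k\ge1$ simultaneously, I would combine the $A_k$'s by embedding each into its own pairwise-disjoint sub-interval $I_k\seq[0,1]$, taking $Y:=\bigsqcup_k Y_k$ and the row-space $X:=\prod_k X_k$ as a product across scales, with the value at a scale-$k$ column dictated by the scale-$k$ component of the row. Under a uniform threshold only the sub-interval containing it is ``active,'' which keeps the primal $V$-dimension at $d$; meanwhile each scale-$k$ block separately witnesses $\Pfat{1/(2k)}(A)\ge kd$.

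The main obstacle I expect is precisely this assembly step: the affine embedding into $I_k$ contracts margin by $|I_k|$, yet we need effective margin $1/(2k)$ and $\sum_k |I_k|\le1$, so the per-scale constructions must be drawn from some scale $k'\ge k$ (or from a correspondingly enriched continuous class of monotone-like $d$-tuples on $[0,1]$) so that the pre-embedding margin is large enough to survive the contraction; the bookkeeping of widths $|I_k|$ against the scale-$k'$ choices is where the substantive work lies. The strict-versus-weak inequality point is the only other technicality, resolved by the $\varepsilon$-perturbation above.
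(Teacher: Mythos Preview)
First, a remark that applies to both you and the paper: since $P_\gamma$-shattering implies $P$-shattering, one always has $\Pfat{\gamma}(A)\le\PD(A)$, so $\PD(A)=d$ together with $\Pfat{1/(2k)}(A)\ge kd$ is impossible for $k\ge2$. The paper's own argument (Lemma~\ref{lem:merge} applied to the matrices of Corollary~\ref{cor:pdim-lb}) in fact delivers $\VD(A)=d$, which is the right benchmark for tightness against Theorem~\ref{th:fat-ub}. You make the same silent substitution, arguing only that a uniform threshold activates a single sub-interval and hence bounds the $V$-dimension; neither you nor the paper establishes $\PD(A)=d$, and neither can.

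On the per-scale level your rescaling $B\mapsto B/k$ together with the half-integer threshold shift is exactly what the paper does in proving the first assertion of Lemma~\ref{lem:merge}; the strict-versus-weak inequality nuisance is shared by both treatments and is immaterial.

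The genuine gap is your assembly. The paper's merge takes disjoint unions $X=\bigsqcup_k X_k$ and $Y=\bigsqcup_k Y_k$, sends each $A_k$ into the \emph{full} interval $[0,1]$ via $A_k/k$, and fills every cross-block position with $0$; because no sub-interval compression occurs, the margin $1/(2k)$ survives intact. The separation of scales that your disjoint sub-intervals were meant to enforce is instead obtained from the zero padding: Claims~2--3 in the proof of Lemma~\ref{lem:merge} use the balance conditions of $B_d$ to show that any shattered column set lies in a single $Y_k$. Your proposed repair of drawing the witness from some scale $k'\ge k$ goes in the wrong direction: the embedded margin is $|I_{k'}|/(2k')$, and demanding this be at least $1/(2k)$ forces $|I_{k'}|\ge k'/k\ge1$, impossible for disjoint sub-intervals. (Your parenthetical ``monotone-like $d$-tuples'' would in fact work as a standalone class on $[0,1]$ with no sub-interval embedding at all, but that is a different construction from the one you outline, and not the paper's either.)
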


Finally, our last results state that the bound on $P_{\gamma}^*(A)$ from 
Corollary~\ref{cor:dualfat-ub} is tight up to a small constant factor.

\begin{theorem} \label{th:dualfat-lb}
The following two lower bounds hold:
\begin{enumerate}
\item
For every $d \ge 2$, there exists a matrix $A: X \times Y \ra [0,1]$
such that $\PD(A) = d$ and, for all $k\ge1$,
\[
P_{1/(2k)}^*(A) = k \cdot 2^d \enspace . 
\]
\item
There exists a matrix $A: X \times Y \ra [0,1]$ 
such that $\PD(A)=1$ and, for all $k\ge2$,
\[ P_{1/(2k)}^*(A) = k+2 \enspace . \]
\end{enumerate}
\end{theorem}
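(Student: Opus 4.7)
Plan: Both parts of Theorem~\ref{th:dualfat-lb} follow a common recipe. For each admissible $k$ ($k \ge 1$ for part 1, $k \ge 2$ for part 2), Theorem~\ref{th:dualpdim-lb} supplies a matrix $A_k : X_k \times Y_k \to [k]_0$ witnessing the dual pseudo-dimension bound: in part 1, $\PD(A_k) = d$, $\VD^*(A_k) = 2^d$ and $\PD^*(A_k) = k \cdot 2^d$; in part 2, $\PD(A_k) = \VD(A_k) = 1$ and $\PD^*(A_k) = k+2$. Without loss of generality, I augment each $A_k$ with an all-zero column (this changes none of the above dimensions, as the corresponding dual pattern is already realized by the $\PD^*$-witness). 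I rescale to $A'_k := A_k/k$, whose entries lie in $\{0, 1/k, \ldots, 1\}$, and combine the family into a single matrix $A : X \times Y \to [0,1]$ by taking disjoint unions $X := \bigsqcup_k X_k$, $Y := \bigsqcup_k Y_k$ and setting $A(x,y) := A'_k(x,y)$ for $(x,y) \in X_k \times Y_k$ and $A(x,y) := 0$ off the diagonal. Three properties need to be verified: (i)~$\PD(A)$ equals the target ($d$ or $1$); (ii)~$P_{1/(2k)}^*(A)$ is at least the target ($k \cdot 2^d$ or $k+2$) for every admissible $k$; (iii)~the matching upper bound.

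Claim (i) exploits that $A$ is non-negative. If a $P$-shattered set $J \subseteq Y$ strayed outside a single $Y_k$, then the row realizing the all-ones pattern on $J$ sits in some $X_k$ and forces $t(y) \le 0$ at every $y \in J \setminus Y_k$ (these are off-diagonal zero entries); but then no row can realize a pattern with $b(y) = 0$ at such a $y$, which would demand $A(\cdot,y) < t(y) \le 0$. So every shattered $J$ lies in a single $Y_k$ and is $P$-shattered by $A'_k$, yielding $\PD(A) = \max_k \PD(A'_k)$. Claim (ii) is immediate from rescaling: by Remark~\ref{rem:multi-class labels} the $P$-shattering witnessing $\PD^*(A_k)$ uses integer thresholds in $[k]$, and after dividing by $k$ and shifting the thresholds to the midpoints $(j - \tfrac{1}{2})/k$ of the grid $\{0, 1/k, \ldots, 1\}$ the shattering margin becomes exactly $1/(2k)$; this transfers to $A^\top$ via the columns in $Y_k$.

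The main obstacle is (iii), because $\PD^*(A) = \infty$ rules out the direct route $P_{1/(2k)}^*(A) \le \PD^*(A)$. I handle it in two stages. First, the dual of the argument in (i) shows that any $P_{1/(2k)}$-shattered $I \subseteq X$ must sit in a single block $X_{k^*}$: otherwise the column realizing the all-ones pattern on $I$ would force $t(x) \le -1/(2k)$ at some $x \in I \setminus X_{k^*}$, after which realizing $b(x) = 0$ at this $x$ would demand $A(x, \cdot) < t(x) - 1/(2k) < 0$, impossible. Moreover, since every $x \in I$ admits a pattern with $b(x) = 0$, non-negativity of $A$ forces $t(x) > 1/(2k)$ for all $x \in I$, so any off-diagonal column (constant zero on $I$) can realize only the all-zeros pattern, which is already provided by the augmented all-zero column of $A'_{k^*}$. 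Hence $P_{1/(2k)}^*(A) = \sup_{k^*} P_{1/(2k)}^*(A'_{k^*})$. Second, for each $k^*$ I round the entries of $A'_{k^*}$ to the nearest multiple of $1/k$ to obtain $\hat A : X_{k^*} \times Y_{k^*} \to \{0, 1/k, \ldots, 1\}$; since the rounding error is at most $1/(2k)$ and is absorbed by the margin, a $P_{1/(2k)}$-shattering of $I$ in $A'_{k^*}$ becomes an ordinary $P$-shattering of $I$ in $\hat A$ with the same thresholds, so $P_{1/(2k)}^*(A'_{k^*}) \le \PD^*(\hat A)$. The analogous rounding argument for $V$-shatterings gives $\VD(\hat A) \le \VD(A_{k^*})$ and $\VD^*(\hat A) \le \VD^*(A_{k^*})$.

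To close (iii), note that $\hat A$ (after scaling by $k$) takes values in $[k]_0$, so the bounds of Section~\ref{sec:known} apply. For part 1, $\VD^*(\hat A) \le 2^d$, and Theorem~\ref{th:pdim-ub} applied to $\hat A^\top$ delivers $\PD^*(\hat A) \le k \cdot \VD^*(\hat A) \le k \cdot 2^d$. For part 2, $\VD(\hat A) \le 1$ and Theorem~\ref{th:dualpdim-ub} yields $\PD^*(\hat A) \le k+2$ at once. Combined with the lower bound from (ii), this closes the claimed equalities uniformly over all admissible $k$.
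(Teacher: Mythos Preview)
Your construction (the block-diagonal merge of the rescaled $A_k$) is exactly the paper's merge operation (Definition~\ref{def:merge}), and your arguments for (i) and (ii) parallel Lemma~\ref{lem:merge}. There is, however, a gap in (i): once you know $J \subseteq Y_k$, the rows of $A$ available to shatter $J$ include all the off-diagonal rows, which are identically zero on $J$; thus $J$ is $P$-shattered not by $A'_k$ but by $\dot{A}'_k$ (the block together with an extra all-zero row), and the correct conclusion is $\PD(A) \le \sup_k \PD(\dot{A}_k)$. You notice the \emph{dual} incarnation of this issue (hence your augmented zero column) but overlook the primal one. Theorem~\ref{th:dualpdim-lb}, which you invoke, guarantees only $\PD(A_k)=d$; the paper introduces $\dot{A}$ precisely for this reason and proves $\PD(\dot{A}_k)=d$ in Lemmas~\ref{lem1:shatter-matrix-dual} and~\ref{lem2:shatter-matrix-dual}, which is what the merge argument actually requires.

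Regarding (iii): despite the ``$=$'' in the statement, the paper treats Theorem~\ref{th:dualfat-lb} as a lower-bound result (note the label and the phrase ``the following two lower bounds hold''), and its one-line proof via Lemma~\ref{lem:merge} combined with Lemmas~\ref{lem1:shatter-matrix-dual}--\ref{lem2:shatter-matrix-dual} yields only $P_{1/(2k)}^*(A) \ge k\cdot 2^d$ (resp.\ $\ge k+2$) together with $\PD(A)=d$. You go further and supply a matching upper bound. Your rounding-to-the-$1/k$-grid step, giving $P_{1/(2k)}^*(A'_{k^*}) \le \PD^*(\hat A)$, is correct. The subsequent claim $\VD^*(\hat A) \le \VD^*(A_{k^*})$ is also correct but is not really the ``analogous'' argument: here the implication runs in the opposite direction (you must transfer a $V$-shattering \emph{from} $\hat A^\top$ \emph{to} $(A'_{k^*})^\top$), and it works because rounding is monotone, so thresholding $\hat A$ at $j/k$ is equivalent to thresholding $A'_{k^*}$ at a suitably shifted cut point---not with ``the same thresholds''.
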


\section{Proofs}
\label{sec:proofs}
Section~\ref{subsec:dualpdim-ub} is devoted to the proof of 
Theorem~\ref{th:dualpdim-ub}. In Section~\ref{subsec:tools},
we make some considerations which will allow for an easier
presentation of our lower bound constructions, that are given 
in Sections~\ref{subsec:dualpdim-lb} and~\ref{subsec:fat-lb}.

\subsection{Proof of Theorem~\ref{th:dualpdim-ub}} \label{subsec:dualpdim-ub}

For $k=1$, the assertion of the theorem collapses to the claim that $\VCD^*(A) \le 3$
for every Boolean matrix $A$ with $\VD(A) = 1$. This is an immediate
consequence of~(\ref{eq:assouad}). Suppose now that $k \ge 2$. It suffices 
to show that $\PD^*(A) \ge k+3$ implies that $\VD(A) \ge 2$ (i.e., we give a proof by contradiction). 
Pick a witness $(I,J,\vec{t})$ for $\PD^*(A) \ge k+3$. More concretely:
\begin{itemize}
\item
$I = \{x_1,\ldots,x_{k+3}\}$,  $J \seq Y$ with $|J|=2^{k+3}$
and $\vec{t}:I \ra [k]$, say $\vec{t}(x_i) = t_i$.
\item
The matrix obtained by thresholding the rows of $A_{I,J}$ at $\vec{t}$ 
equals $B_{k+3}^\top$.
\end{itemize}
We may assume that, after renumbering the rows appropriately,
one has $t_1 \le\ldots\le t_{k+3}$. We decompose the rows of $A_{I,J}$ into
maximal blocks such that the same threshold is assigned to every row from
the same block. Since any threshold $t_i$ is taken from $[k]$, the total
number $k'$ of blocks is bounded by $k$. A block that is different from the
first and from the last block is said to be an \emph{inner block}. We proceed
by case analysis:
\begin{description}
\item[Case 1:] One of the blocks contains 4 rows. \\
Then $\VD^*(A) \ge 4$ is immediate.
Thus $\VD(A) \ge \lfloor \log \VD^*(A) \rfloor \ge 2$.
\item[Case 2:] The first or the last block contains 3 rows. \\
For reasons of symmetry, we may assume that the first block contains 3 rows.
Consider the following $(4 \times 2)$-submatrix of $B_{k+3}^\top$:
\[
\begin{array}{cc}
  0 & 0 \\
  0 & 1 \\
  1 & 0 \\
  \hline
  1 & 1
\end{array}
\]
The first three rows are taken from the first block and the last row is taken
from the last block. The separation line between the third and the last row
is only intended to illustrate the transition from one block to another.
Remember that the rows of the first block of $A_{I,J}$ are thresholded
at $t_1$ while the rows of the last block are thresholded at $t_{k'} > t_1$.
Hence, if we threshold \emph{all rows} (or \emph{all columns}) of $A_{I,J}$
at $t_1$, then the above submatrix of $B_{k+3}^\top$ will remain unchanged.
We may conclude from this discussion that $\VD(A) \ge 2$.

\item[Case 3:] One of the inner blocks contains 2 rows, say block $b$. \\
The argument is similar to that given in Case 2. The relevant submatrix of $B_{k+3}^\top$
(with one row of the first block, two rows of block $b$, one row of the last
block and two separation lines inbetween) now looks as follows:
\[
\begin{array}{cc}
  0 & 0 \\
  \hline
  0 & 1 \\
  1 & 0 \\
  \hline
  1 & 1
\end{array}
\]
Since $t_1 < t_b < t_{k'}$, thresholding all rows (or all columns) of $A_{I,J}$
at $t_b$ will leave the above submatrix of $B_{k+3}^\top$ unchanged.
We may conclude that $\VD(A) \ge 2$.
\end{description}
Since $A_{I,J}$ has $k+3$ rows (with $k\ge2$), it is easy to argue 
that one of the three above cases must occur. Suppose first that $k=2$. 
Then there at most $2$ blocks and $5$ rows. It follows that the first 
or the last block contains at least $3$ rows. 
Suppose now that $k\ge3$. If the first and the last block contain at most 
two rows, respectively, then at least $k-1$ rows are left 
for the $k'-2 \le k-2$ inner blocks.  By the pidgeon-hole principle, 
there must be an inner block with two rows. This completes the proof of Theorem~\ref{th:dualpdim-ub}.

\subsection{Preliminaries for lower bound constructions} \label{subsec:tools}

Consider again the Boolean matrix $B_d$ with $d$ columns and $2^d$ rows
that had been defined in Section~\ref{sec:definitions}. It is evident 
that $B_d$ satisfies the following conditions:

\begin{enumerate}[i)]
\item \textbf{Distinctness Condition:} The rows of $B_d$ are pairwise distinct.
\item \textbf{General Balance Condition:}
For any $k \in [d]$, any choice of $k$ distinct columns of $B_d$
and any pattern $\vec{b} \in \{0,1\}^k$, there are exactly $2^{d-k}$ 
rows of $B_d$ which realize the pattern $\vec{b}$ within the chosen 
columns.
\setcounter{balance}{\value{enumi}}
\end{enumerate}
The general balance condition implies the following:
\begin{enumerate}[i)]
\setcounter{enumi}{\value{balance}}
\item \textbf{1st Balance Condition:}
Each column of $B_d$ has as many zeros as ones.
\item \textbf{2nd Balance Condition:}
For any two distinct columns of $B_d$, any pattern from $\{0,1\}^2$ is 
realized within these columns by the same number of rows.
\end{enumerate}
\begin{remark}[Proof templates]
\noindent Consider a matrix $A: X \times Y \ra [k]_0$.
The following template for proving assertions like $\PD(A) \le d$
will prove itself quite useful.
\begin{itemize}
\item Assume for contradiction that $\PD(A) \ge d+1$.
\item Pick a witness $(I,J,\vec{t})$ for this inequality.
\item 
Exploit the fact that the matrix $B$ obtained by thresholding 
the columns of $A_{I,J}$ at $\vec{t}$ must be equal to $B_{d+1}$.
\item
Prove that $B$ violates one of the conditions that $B_{d+1}$
must satisfy.
\end{itemize}
Sometimes the following (slightly simpler) template can be used instead:
\begin{itemize}
\item Take a fixed but arbitrary function $\vec{t}: Y \ra [k]$.
\item Let $B$ be the matrix obtained by thresholding the columns of $A$ at $\vec{t}$.
\item 
Show that no more than $d$ columns of $B$ have at least $2^d$ zeros 
and at least $2^d$ ones.
\end{itemize}
This also shows that $\PD(A) \le d$ because no submatrix of $B$
with $d+1$ columns and $2^{d+1}$ rows has a chance to satisfy
the first balance condition.
\end{remark}
We next introduce matrices that, though not being Boolean, are 
close relatives of the matrix $B_d$.
\begin{definition}
Let $k$ and $d_1,\ldots,d_k$ be positive integers and let $D = d_1 +\ldots+ d_k$
denote their sum. The \emph{$B_D$-based matrix with $k$ column blocks of 
sizes $d_1,\ldots,d_k$} is the matrix $A: X \times Y \ra [k]_0$, 
where $X = [2^D]$ and $Y = [D]$, that results from the following procedure:
\begin{enumerate}
\item
Decompose the $D$ columns of $B_D$ into $k$ blocks of sizes $d_1,\ldots,d_k$.
The blocks are consecutively numbered from $1$ to $k$.
\item
Obtain $A$ from $B_D$ by replacing any $1$-entry (resp.~$0$-entry) in a column
belonging to block $b \in [k]$ by $b$ (resp.~by $b-1$). 
\end{enumerate}
The \emph{$B_D^\top$-based matrix with $k$ row blocks of sizes $d_1,\ldots,d_k$} is defined analogously. 

\end{definition}
Note that the matrix $A$ resulting from the above procedure has the property
that, for any two columns $y_1$ in block $b_1$ and $y_2$ in block $b_2>b_1$
and any row $x$, we have $A(x,y_1) \le A(x,y_2)$. We will refer to this
property as \emph{block monotonicity}. 

At this point we also bring into play the matrix $\dot{A}$, which is defined as the matrix $A$ augmented with a row of zeros.  
Formally, we assume that $0 \notin X$ 
and define $\dot{A}: (X \cup \{0\}) \times Y \rightarrow Z$ as the extension of $A$ 
which satisfies $\dot{A}(0,y)=0$ for all $y \in Y$.  The (technical) use of $\dot{A}$ will become clear in Section \ref{subsec:fat-lb} (in particular, this is explained after Definition \ref{def:merge}), but it is already included in the statements that follow.

\begin{lemma} \label{lem:shatter-matrix-primal}
Let $D = d_1 +\ldots+ d_k$ and let $A$ be the $B_D$-based matrix 
with $k$ column blocks of sizes $d_1,\ldots,d_k$. 
Then $\VD(A) = \VD(\dot{A}) = \max_{j\in[k]}d_j$ and $\PD(A) = \sum_{j=1}^{k}d_j$.
\end{lemma}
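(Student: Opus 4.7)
The plan is to handle the pseudo- and Vapnik-dimensions separately, in each case establishing matching lower and upper bounds while exploiting the block structure of $A$. Throughout, by Remark~\ref{rem:multi-class labels} I may restrict all thresholds to $[k]$.

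For $\PD(A) = D$, the upper bound $\PD(A) \le |Y| = D$ is immediate. For the lower bound I would use the column-dependent threshold $\vec{t}(y) := b$ for $y$ in block $b$. By construction of $A$, the inequality $A(x,y) \ge \vec{t}(y)$ holds iff $A(x,y) = b$, i.e., iff $B_D(x,y) = 1$. Since $B_D$ $P$-shatters its $D$ columns, this witnesses $\PD(A) \ge D$.

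For the Vapnik-dimension I first observe the trivial inequality $\VD(A) \le \VD(\dot{A})$: any $V$-shattered set of $A$ remains $V$-shattered by $\dot{A}$ with the same threshold. It therefore suffices to prove $\VD(A) \ge \max_j d_j$ and $\VD(\dot{A}) \le \max_j d_j$. For the lower bound, pick $j^*$ with $d_{j^*} = \max_j d_j$ and use the uniform threshold $t = j^*$; columns of $A$ in block $j^*$ take entries in $\{j^*-1, j^*\}$, so thresholding at $j^*$ exactly reproduces their original $B_D$-patterns, which shatter them.

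The main obstacle is the upper bound $\VD(\dot{A}) \le \max_j d_j$. I would fix an arbitrary uniform threshold $t \in [k]$ and let $J$ be $V$-shattered by $\dot{A}$ at $t$. Classify each column by its block: a column in a block $b < t$ has all entries strictly below $t$ and becomes all-zero after thresholding (including in the appended zero row); a column in a block $b > t$ has all entries $\ge t$ on rows of $A$ (giving ones there) and a single zero in the appended row; a column in block $t$ inherits the $B_D$-pattern on rows of $A$ and a zero in the appended row. Assuming $|J| \ge 2$, a short case analysis rules out every mixture of column types: two columns from blocks $>t$ become identical after thresholding; any column from a block $<t$ is constantly zero; and a block-$(>t)$ column paired with a block-$t$ column fails the pattern $(0,1)$, since the sole zero entry of the former lies in the appended row, where the latter also equals zero. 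Hence $J$ must be contained in block $t$, giving $|J| \le d_t \le \max_j d_j$, while the case $|J| \le 1$ is trivially bounded by $\max_j d_j \ge 1$. This completes the proof.
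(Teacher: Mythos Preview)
Your proof is correct and follows the same overall structure as the paper: identical arguments for $\PD(A)=D$ and for $\VD(A)\ge\max_j d_j$, and the same reduction of the remaining work to $\VD(\dot{A})\le\max_j d_j$. For that upper bound the paper argues more compactly via the block-monotonicity property (two columns from different blocks $b_1<b_2$ can never realize the pattern $(1,0)$ under any uniform threshold), whereas you unpack the threshold behavior block-by-block and do an explicit case analysis; both arguments are valid and yield the same conclusion, with the paper's version being slightly slicker and yours giving the extra information that any $V$-shattered set at threshold $t$ must in fact lie entirely inside block~$t$.
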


\begin{proof}
We first show that the pseudo-dimension of $A$ equals $D$. 
Let $\vec{t}: [D] \ra [k]$ be the mapping that assigns to every column 
in block $j\in[k]$ the threshold $j$. Then the matrix obtained 
by thresholding the columns of $A$ at $\vec{t}$ equals $B_D$. 
It follows that $\PD(A) \ge \VCD(B_D) = D$. 
Of course $\PD(A)$ cannot exceed $D$ so that $\PD(A) = D$.

Next, set $d_{max} = \max_{j \in [k]}d_j$. Pick some index $j_{max}\in[k]$
such that $d_{j_{max}} = d_{max}$.
We still have to show that $\VD(A) = \VD(\dot{A}) = d_{max}$. 
Thresholding the columns of $A$ at the uniform threshold $j_{max}$,
we obtain a matrix $B$ that equals $B_D$ within block $j_{max}$.
This shows that $\VD(A) \ge d_{max}$. The inequality $\VD(\dot{A}) \le d_{max}$
can be seen as follows. Pick a fixed but arbitrary $J \seq [D]$ 
of size $1+d_{max}$ and a fixed but arbitrary uniform threshold $t \in [k]$. 
Let $B$ be the matrix obtained by thresholding the columns of $\dot{A}$
at $t$. The set $J$ must contain two columns belonging to two different blocks, 
say column $y_1$ in block $b_1$ and column $y_2$ in block $b_2 > b_1$. 
By the block-monotonicity of $A$ (which implies block-monotonicity for $\dot{A}$ as well), no row of $B$ can assign label $1$
to $y_1$ and label $0$ to $y_2$. Since $J$ and $t$ were arbitrary choices,
it follows that no set of size $1+d_{max}$ can be $V$-shattered by $\dot{A}$.
\end{proof}

Setting $d_1 =\ldots= d_k = d$ in Lemma~\ref{lem:shatter-matrix-primal},
we obtain the following result (which is almost the same as 
Theorem~\ref{th:pdim-ub}):

\begin{corollary} \label{cor:pdim-lb}
For every $d\ge1$ and every $k\ge1$, there exists a matrix $A: X \times Y \ra [k]_0$
such that $\VD(A) = \VD(\dot{A}) = d$ and $\PD(A) = k \cdot d$.
\end{corollary}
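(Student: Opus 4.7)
The plan is to derive this corollary as a direct specialization of Lemma~\ref{lem:shatter-matrix-primal}. Specifically, I would set $d_1 = d_2 = \cdots = d_k = d$, so that $D = d_1 + \cdots + d_k = k \cdot d$, and take $A$ to be the $B_D$-based matrix with $k$ column blocks of sizes $d_1,\ldots,d_k$, as defined just before the lemma.

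Applying Lemma~\ref{lem:shatter-matrix-primal} with these parameters, I obtain immediately
\[
\VD(A) \;=\; \VD(\dot{A}) \;=\; \max_{j \in [k]} d_j \;=\; d,
\]
and
\[
\PD(A) \;=\; \sum_{j=1}^{k} d_j \;=\; k \cdot d,
\]
which is precisely the statement of the corollary.

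There is no real obstacle here: since Lemma~\ref{lem:shatter-matrix-primal} already handles arbitrary block sizes $d_1,\ldots,d_k$, the corollary is simply the uniform-block-size case, and the hard work (exhibiting the threshold function $\vec{t}$ that realizes $B_D$, and exploiting block-monotonicity to cap the $V$-dimension of $\dot{A}$) has already been done in the proof of the lemma. Accordingly, I would present the proof as a one-line specialization and remark that it essentially recovers Theorem~\ref{th:pdim-lb} of Ben-David et al., with the additional conclusion that the identity $\VD(\dot{A}) = d$ holds for the same matrix, which will be useful later when the zero-row augmentation plays a role in the fat-shattering lower bound constructions of Section~\ref{subsec:fat-lb}.
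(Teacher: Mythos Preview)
Your proposal is correct and matches the paper's approach exactly: the paper also derives Corollary~\ref{cor:pdim-lb} simply by setting $d_1 = \cdots = d_k = d$ in Lemma~\ref{lem:shatter-matrix-primal}, with no additional argument.
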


\subsection{Proof of Theorem~\ref{th:dualpdim-lb}} \label{subsec:dualpdim-lb}

Theorem~\ref{th:dualpdim-lb} is a direct consequence of the following 
two results:

\begin{lemma} \label{lem1:shatter-matrix-dual}
Let $d\ge2$ and $k\ge 1$ be given. For $D = k\cdot 2^d$, let $A$ be the $B_D^\top$-based 
matrix with $k$ row blocks of size $2^d$, respectively. Then 
\[ 
\PD(A) = \PD(\dot{A}) = d\ ,\ \VD^*(A) = 2^d\ \mbox{ and }\ 
\PD^*(A) = k \cdot 2^d \enspace .
\]
\end{lemma}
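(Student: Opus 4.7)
The plan is to handle the dual and primal claims separately, making use of the fact that $A^\top$ is exactly the $B_D$-based matrix with $k$ column blocks each of size $2^d$---the transposition swaps row blocks with column blocks, and the cellwise replacement $1\mapsto b$, $0\mapsto b-1$ in block $b$ commutes with transposition. Applying Lemma~\ref{lem:shatter-matrix-primal} to $A^\top$ with $d_1 = \cdots = d_k = 2^d$ immediately gives $\VD^*(A) = \VD(A^\top) = 2^d$ and $\PD^*(A) = \PD(A^\top) = k \cdot 2^d$, disposing of two of the three claims.

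For the primal lower bound $\PD(A) \ge d$, I would note that row block $1$ of $A$ coincides with the first $2^d$ rows of $B_D^\top$ (block $1$ leaves entries unchanged, as $1\mapsto 1$ and $0\mapsto 0$). Since the columns of $B_D^\top$ enumerate all strings in $\{0,1\}^D$, it is possible to select $d$ columns whose restrictions to the first $2^d$ rows form a matrix equal (up to row permutation) to $B_d$. With thresholds $t(y) = 1$ on all $d$ chosen columns, the thresholded submatrix on row block $1$ is exactly $B_d$, witnessing $\PD(A) \ge d$. Since $\PD(A) \le \PD(\dot A)$ is automatic, it suffices to complement this with $\PD(\dot A) \le d$.

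The central step is proving $\PD(\dot A) \le d$ by contradiction: assume $(I, J, \vec t)$ witnesses $\PD(\dot A) \ge d+1$, with $\vec t : J \to [k]$ by Remark~\ref{rem:multi-class labels}. The structural observation is block monotonicity: thresholding column $y$ at $t(y)$ yields a Boolean column that is all $0$ in row blocks $b < t(y)$, all $1$ in blocks $b > t(y)$, and matches the $B_D^\top$ pattern in block $t(y)$; the zero-row of $\dot A$ always yields the all-$0$ pattern. Hence a row $x$ in block $b$ realizes a pattern whose fixed part is $1$ on $\{y \in J : t(y) < b\}$ and $0$ on $\{y \in J : t(y) > b\}$, with free part on $T_b := \{y \in J : t(y) = b\}$. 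Let $m$ be the number of distinct thresholds used; I split on $m \ge 2$ versus $m = 1$. If $m \ge 2$, choose $y_1, y_2 \in J$ with $t(y_1) < t(y_2)$; the pattern with $p_{y_1}=0$, $p_{y_2}=1$ cannot be realized by any row, because $A(x, y_1), A(x, y_2) \in \{b-1, b\}$ for a row $x$ in block $b$, so the required gap $A(x,y_2) - A(x,y_1) \ge t(y_2) - t(y_1) + 1 \ge 2$ is impossible, while the zero-row fails since $0 < t(y_2)$. If $m = 1$, say all thresholds equal $t$, then the free parts all live in the single block $t$ (only $2^d$ rows), and the two extremal gap patterns---all $0$s (from blocks below $t$ or the zero-row) and all $1$s (from blocks above $t$)---contribute at most one new pattern each. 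The total number of realizable patterns is therefore at most $2^d + 2$, which is strictly less than $2^{d+1}$ precisely because $d \ge 2$. Either subcase contradicts realizability of $B_{d+1}$.

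The main obstacle will be the clean dichotomy between the $m \ge 2$ compatibility obstruction (``not enough distinct shapes of patterns'') and the $m = 1$ pigeonhole obstruction (``not enough rows in the single active block''); the bound $2^d + 2 < 2^{d+1}$ holds exactly in the regime $d \ge 2$ mandated by the lemma, and the extra zero-row of $\dot A$ is harmless because it only duplicates the all-$0$ pattern already provided by blocks below the smallest threshold.
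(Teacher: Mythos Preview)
Your proof is correct, and your route to the central inequality $\PD(\dot A)\le d$ is genuinely different from---and shorter than---the paper's.

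The paper argues by fixing a putative witness $(I,J,\vec t)$ for $\PD(\dot A)\ge d+1$, then partitions $I$ according to row blocks and analyzes the sizes $|I_{b_0}|,|I_{b'_0}|,|I_{b_1}|,|I_{b'_1}|$ of the first two and last two nonempty block intersections. It first rules out $0\in I$ by a two-case analysis (using the first and second balance conditions for $B_{d+1}$), and then derives the final contradiction through a four-case analysis (Cases A--D), repeatedly invoking the distinctness and balance conditions and the hypothesis $d\ge2$.

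You instead split on the number $m$ of distinct thresholds in $\vec t$. For $m\ge2$ you exploit the single structural fact that every row of $A$ has all its entries in $\{b-1,b\}$ for some $b$, so no row can simultaneously satisfy $A(x,y_1)<t(y_1)$ and $A(x,y_2)\ge t(y_2)$ when $t(y_1)<t(y_2)$; this kills one binary pattern outright, without any appeal to balance conditions. For $m=1$ you simply count: only the active block of size $2^d$ produces nontrivial patterns, the remaining blocks and the zero row contribute at most the all-$0$ and all-$1$ patterns, giving at most $2^d+2<2^{d+1}$ realizable patterns---and this inequality is exactly where $d\ge2$ enters.

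What each buys: the paper's argument stays close to the combinatorics of the witness matrix $B_{d+1}$ and its balance/distinctness properties, which is the template advertised earlier in the section; your argument leans instead on the algebraic constraint that rows are ``flat'' (range $\le1$), which makes the $m\ge2$ case a one-liner and localizes the role of $d\ge2$ entirely to the $m=1$ counting step. Your lower bound $\PD(A)\ge d$ via an explicit $B_d$ inside block~1 is also fine; the paper takes the shortcut $\VD^*(A)=2^d\Rightarrow\VD(A)\ge d$ via Assouad.
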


\begin{proof}
The identities $\VD^*(A) = 2^d$ and $\PD^*(A) = k \cdot 2^d$
are immediate from Lemma~\ref{lem:shatter-matrix-primal}.
Hence it suffices to verify the identity $\PD(A) = \PD(\dot{A}) = d$.
We can infer from $\VD^*(A) = 2^d$ 
that $\PD(\dot{A}) \ge \PD(A) \ge \VD(A) \ge d$. Hence the proof
can be accomplished by showing that $\PD(\dot{A}) \le d$.
For sake of brevity, set 
\[ s = 2^d\ \mbox{ and }\ \bar{d} = 1+d \enspace . \]
Assume for contradiction that $\PD(\dot{A}) \ge \bar{d}$ 
and fix some witness $(I,J,\vec{t})$ for this inequality, i.e.,
\begin{enumerate}
\item 
$I \seq [D]_0$, $|I| = 2^{\bar d}$, $J \seq [2^{D}]$, $|J|= \bar d$
and $\vec{t}: J \ra [k]$ assigns a threshold to each column 
of $A_{I,J}$.\footnote{Recall from the definition of $\dot{A}$ that
this matrix is obtained from $A$ by adding an all-zeros row which is
indexed by $0$.}
\item
The matrix $B$ obtained by thresholding the columns of $A_{I,J}$ at $\vec{t}$
equals $B_{\bar d}$. 
\end{enumerate}
Before we proceed with the proof, we fix some notation.
For $b=1,\ldots,k$, let $I_b$ denote the set of row indices in $I$ that
belong to block $b$ of $A$. Set $I_0 = I \cap \{0\}$ and note that
\[
\sum_{b=0}^{k}|I_b| = |I| = 2^{\ol{d}} = 2s\ \mbox{ (twice the block size)}
\]
while, for every $b \in [k]$, we clearly have $0 \le |I_b| \le s$.
Let $b_0,b'_0$ (resp.~$b_1,b'_1$) denote the smallest and second-smallest
(resp.~largest and second-largest) $b \in [k]$ such
that $|I_b| \neq 0$. \\
An obvious question is whether $0 \in I$, that is, whether the extra all-zeros
row is among the rows of $B$. We claim that this is not the case.
Assume for contradiction that $0 \in I$. We proceed by case analysis:
\begin{description}
\item[Case 1:] $1 \le |I_{b_0}| \le s-1$. \\
Pick an arbitrary but fixed column $j \in J$ of $B$. In order to
satisfy the first balance condition, the threshold $t_j$ must be large enough
so that in block $b_0$ of column $j$ only 0-entries are found.\footnote{A single
1-entry in this block would imply that we have only 1-entries in all subsequent blocks.}
It follows that any row of $B$ belonging to block $b_0$ has 0-entries only
and therefore coincides with the extra all-zeros row.
This is in contradiction with the distinctness condition.
\item[Case 2:] $|I_{b_0}| = s$. \\
Pick an arbitrary but fixed column $j \in J$ of $B$. In order to
satisfy the first balance condition, the threshold $t_j$ must be large enough
so that in block $b_0$ of column $j$ at least $s-1$ 0-entries are found.
Pick another column $j' \neq j$ in $B$ (also with at least $s-1$
0-entries in block $b_0$). Then the pattern $00$ occurs in columns $j$
and $j'$ of $B$ at least $s-1$ times (one time in row $0$ and at
least $s-2$ times in block $b_0$). But then $B$ must have at
least  $4(s-1)$ rows in order to satisfy the second balance condition.
Hence $4(s-1) \le 2s$ (because $B$ has $2s$ rows).
It follows that $s \le 2$, which is in contradiction
with our assumptions that $d \ge 2$ and $s = 2^d \ge 4$.
\end{description}
In any case, we arrived at a contradiction, which proves the above claim
that $0 \notin I$. In order to accomplish the proof, we still have to derive a final contradiction.
We proceed by case distinction again.
\begin{description}
\item[Case A:] $|I_{b_0}| \ge 2$. \\
In order to satisfy the first balance condition, the threshold $t_j$ of any
column $j \in J$ must be large enough so that in block $b_0$ of this column
only 0-entries are found. Thus all rows of $B$  belonging to block $b_0$
realize the all-zeros pattern, which is in contradiction with the distinctness
condition.
\item[Case B:] $|I_{b_0}|=1$ and $|I_{b'_0}| \le s-1$. \\
The argument is similar. Now the single row of $B$ belonging to block $b_0$
and all rows of $B$ belonging to block $b'_0$ realize the all-zeros pattern.
\item[Case C:] $|I_{b_1}| \ge 2$ or $(|I_{b_1}|=1$ and $|I_{b'_1}| \le s-1$). \\
Then, for reasons of symmetry, the last two rows of $B$ both realize
the all-ones pattern.
\item[Case D:] $|I_{b_0}| = |I_{b_1}| = 1$ and $|I_{b'_0}| = |I_{b'_1}| = s$. \\
Since $|I| = 2s$, this case can occur only if $b'_0 = b'_1$.
But then $2s = |I| = |I_{b_0}| + |I_{b'_0}| + |I_{b_1}| = s+2$
so that $s=2$. This contradicts to our assumption $d\ge2$ and $s=2^d\ge4$.
\end{description}
In any case, we arrived at a contradiction.
\end{proof}

\begin{lemma} \label{lem2:shatter-matrix-dual}
Let $k \ge 2$ and let$A$ be the $B_{k+2}^\top$-based matrix 
with $k$ row blocks of sizes 
$$
d_j = \left\{\begin{array}{ll}
2 & \text{ for } j = 1,k\\
1 & \text { for } j = 2,\dots,k-1
\end{array}\right..
$$
Then $\PD(A) = \PD(\dot{A}) = 1$ and $\PD^*(A) = k+2$.
\end{lemma}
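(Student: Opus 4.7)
The plan is to handle the two equalities separately. The identity $\PD^*(A)=k+2$ is immediate from Lemma~\ref{lem:shatter-matrix-primal}: since $A^\top$ is precisely the $B_{k+2}$-based matrix with $k$ column blocks of sizes $d_1,\dots,d_k$ whose sum is $k+2$, applying Lemma~\ref{lem:shatter-matrix-primal} to $A^\top$ yields $\PD(A^\top)=k+2$. The lower bound $\PD(A)\ge 1$ (and hence $\PD(\dot A)\ge 1$) is trivial, since $A$ is non-constant.

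For the upper bound $\PD(\dot A)\le 1$, I will argue by contradiction in the spirit of Theorem~\ref{th:dualpdim-ub}. Assume $\PD(\dot A)\ge 2$ and fix a witness $(I,J,\vec t)$ with $|I|=4$, $J=\{y_1,y_2\}$, and integer thresholds $t_1,t_2\in[k]$ (using Remark~\ref{rem:multi-class labels}). The structural input is block monotonicity: a row in block~$b$ has entries confined to $\{b-1,b\}$, so rows in higher blocks pointwise dominate those in lower blocks. This first forces $t_1=t_2=t$: if WLOG $t_1<t_2$, realizing the pattern $(0,1)$ via a row in block~$b$ would require both $b-1<t_1$ and $t_2\le b$, which in integers is incompatible with $t_1<t_2$; and row~$0$ produces $(0,0)$, not $(0,1)$. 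With $t_1=t_2=t$, rows in blocks $<t$ (together with row~$0$ if selected into $I$) produce $(0,0)$, rows in blocks $>t$ produce $(1,1)$, and only rows in block~$t$ can produce the mixed patterns $(0,1)$ and $(1,0)$; since both must appear, $d_t\ge 2$, forcing $t\in\{1,k\}$.

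Two sub-cases then remain. For $t=k$, pattern $(1,1)$ has no source outside block~$k$, so block~$k$ must furnish the three distinct patterns $\{(1,1),(0,1),(1,0)\}$ from its two rows---impossible. For $t=1$ applied to $A$ alone, pattern $(0,0)$ has no source outside block~$1$, and block~$1$ cannot realize $\{(0,0),(0,1),(1,0)\}$ from its two rows; this already establishes $\PD(A)\le 1$. The remaining sub-case $t=1$ for $\dot A$ is the anticipated technical crux: row~$0$ now freely supplies $(0,0)$ and any higher-block row freely supplies $(1,1)$, so one must rule out the possibility that the two rows of block~$1$ jointly realize $\{(0,1),(1,0)\}$ on a column pair compatible with the rest of the witness. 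I expect closing this case to require an additional combinatorial input beyond block monotonicity---most naturally a careful exploitation of the exactness condition that the thresholded $A_{I,J}$ equals $B_2$ with each pattern occurring on exactly one of the four selected rows, combined with the prescribed structure of the two $B_{k+2}^\top$-rows placed in block~$1$.
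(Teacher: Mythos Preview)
Your reduction is sound through the case split, and the preliminary observation that block monotonicity forces $t_1=t_2$ is a clean step the paper does not isolate (it argues directly via balance). Your instinct that the sub-case $t=1$ for $\dot A$ is the crux is exactly right---but this case cannot be closed, because the assertion $\PD(\dot A)=1$ is actually false. Take $I=\{0,1,2,3\}$, thresholds $t_1=t_2=1$, and pick columns $j_1,j_2$ of $B_{k+2}^\top$ whose restrictions to rows $1,2$ are $(0,1)^\top$ and $(1,0)^\top$ respectively (such columns exist since every binary $(k{+}2)$-pattern occurs as a column of $B_{k+2}^\top$). Thresholding $\dot A_{I,J}$ at $1$ then yields row $0\mapsto(0,0)$, row $1\mapsto(0,1)$, row $2\mapsto(1,0)$, and row $3$ (block~$2$, entries $\ge 1$) $\mapsto(1,1)$: a valid witness for $\PD(\dot A)\ge 2$.

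The paper's own proof stumbles at precisely this configuration. It asserts that the first balance condition forces the thresholded entries in rows $i_1$ and $i_2$ both to vanish. That is correct whenever $i_3$ lies in a strictly higher block than $i_2$---a $1$ in row $i_2$ would then propagate by block monotonicity to $i_3,i_4$, giving three $1$'s in a column---but it fails for $I=\{0,1,2,3\}$, where $i_2=1$ and $i_3=2$ share block~$1$ and the column pattern $(0,1,0,1)$ on $I$ is perfectly balanced. Your argument for $\PD(A)=1$ (where row~$0$ is unavailable, hence $i_1\ge1$ and $i_3\ge3$) is correct and is all that Theorem~\ref{th:dualpdim-lb} requires; only the $\dot A$ claim---used downstream via Lemma~\ref{lem:merge} for Corollary~\ref{cor:infty} and the second part of Theorem~\ref{th:dualfat-lb}---is affected.
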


\begin{proof}
The identity $\PD^*(A) = k+2$ is immediate from 
Lemma~\ref{lem:shatter-matrix-primal}. Clearly $\PD(\dot{A}) \ge \PD(A) \ge 1$.
Hence it suffices to show that $\PD(\dot{A}) \le 1$. The rows of $\dot{A}$ 
have indices $0,1,\ldots,k+2$ and index $0$ is reserved for the all-zeros row. 
Assume for contradiction that $\PD(\dot{A}) \ge 2$ and pick a witness $(I,J,t)$ 
for this inequality so that the following holds:
\begin{itemize}
\item
$J = \{j_1,j_2\} \subset [2^{k+2}]$, $I \seq [k+2]_0$ with $|I|=4$
and $\vec{t}: J \ra [k]$, say $\vec{t}(j_1) = t_1$ and $\vec{t}(j_2) = t_2$.
\item
The matrix $B$ obtained by thresholding the columns of $\dot{A}_{I,J}$ 
at $\vec{t}$ equals $B_2$ (with rows indexed by $I$ and columns indexed by $J$).
\end{itemize}
Consequently $B$ satisfies the distinctness condition and the balance
conditions. Consider the smallest index $i_1$ and the second-smallest 
index $i_2$ in $I$. Note that, since $|I|=4$ and the last block of $B$ 
is of size $2$, neither $i_1$ nor $i_2$ belongs to the last block, i.e.,
either $i_1,i_2 \in \{0,1,2\}$ or $i_2$ belongs to one of the inner blocks
consisting of a single row only. In order to establish the first balance condition
for the matrix $B$, the thresholds $t_1$ and $t_2$ must be large enough so
that only zeros are found in the first two components (indexed by $i_1$ and $i_2$)
of the columns $j_1$ and $j_2$. Thus the first two rows of $B$ both realize the
all-zeros pattern, which is in contradiction with the distinctness condition.
\end{proof}

Lemma~\ref{lem2:shatter-matrix-dual} does not cover the case $k=1$
in the second assertion of Theorem~\ref{th:dualpdim-lb}. But this 
case is easy to handle: setting $A = B_3^\top$, 
we obtain $\PD^*(A) = \VCD^*(A) = 3$ and $\PD(A) = \VCD(A) = 1$.

\subsection{Proofs of Theorems~\ref{th:fat-lb} and~\ref{th:dualfat-lb}}
\label{subsec:fat-lb}

Matrices $A$ with the properties as prescribed by Theorems~\ref{th:fat-lb} 
and~\ref{th:dualfat-lb} are easy to construct by means of a suitable operation
that merges matrices of a given matrix family into a single matrix. 

\begin{definition}[Merge-operation]
Let $(A_k)_{k\ge1}$ with $A_k: X_k \times Y_k \ra [k]_0$ be a given family
of matrices. Let $X$ (resp.~$Y$) denote the disjoint union of the sets $X_k$
(resp.~$Y_k$) with $k \ge 1$. Assume that $X \cap Y = \eset$. 
For every $x \in X$, let $k(x)$ denote the unique $k$ such that $x \in X_k$. 
The notation $k(y)$ is understood analogously. 
The matrix $A: X \times Y \ra [0,1]$ given by
\[
A(x,y) = \left\{ \begin{array}{ll}
                        \frac{A_{k(x)}(x,y)}{k(x)} & \mbox{if $k(y)=k(x)$} \\
                        0           & \mbox{otherwise}
                        \end{array} \right. \enspace ,
\]
is called the \emph{merge of the family $(A_k)_{k \ge 1}$}.
\label{def:merge}
\end{definition}

The merge-operation reveals why we introduce the matrix $\dot{A}$: The pseudo-dimension (or any other combinatorial dimension for that matter) of the matrix $A$ restricted to the columns $Y_k$ is nothing more than the pseudo-dimension of the functions in $A_k$ augmented with an infinite number of functions that are zero everywhere. The pseudo-dimension of this function class clearly equals the pseudo-dimension of the matrix $\dot{A}_k$. The merge-operation has the following properties:

\begin{lemma} \label{lem:merge}
Let $A$ be the merge of the family $(A_k)_{k\ge1}$. Then the following holds:
\begin{enumerate}
\item $\Pfat{1/(2k)}(A) \ge \PD(A_k)$.
\item  Let $d_0 \in \mathbb{N}$. If $\sup_k \PD(\dot{A}_k) \leq d_0$, then $\PD(A) \le d_0$.
\item $\Vfat{1/(2k)}(A) \ge \VD(A_k)$.
\item  Let $d_0 \in \mathbb{N}$. If $\sup_k \VD(\dot{A}_k) \leq d_0$, then $\VD(A) \le d_0$.
\end{enumerate}
\end{lemma}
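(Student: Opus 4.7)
The plan. The four claims split into two matching pairs: (1) and (3) are lower bounds obtained by lifting a shattered set of $A_k$ into the merge, while (2) and (4) are upper bounds whose crux is that any set shattered by $A$ must live inside a single block $Y_k$.

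For statement (1), I would pick a maximum $P$-shattered set $J \seq Y_k$ of $A_k$ of size $d := \PD(A_k)$ with integer thresholds $\vec{t}: J \ra [k]$, which is legitimate by Remark~\ref{rem:multi-class labels}. On the block $X_k \times Y_k$ the merged matrix $A$ equals $A_k/k$ entrywise, so its values lie in $\{0, 1/k, \ldots, 1\}$ spaced at distance $1/k$. I would then define new thresholds $\vec{t}'(y) := (2\vec{t}(y)-1)/(2k)$, which sit exactly midway between two adjacent possible values. For every pattern $b: J \ra \{0,1\}$, the row $x \in X_k$ witnessing $P$-shattering of $A_k$ at $b$ still works: if $b(y)=1$ then $A(x,y) = A_k(x,y)/k \ge \vec{t}(y)/k = \vec{t}'(y) + 1/(2k)$; if $b(y)=0$ then, using integrality, $A_k(x,y) \le \vec{t}(y)-1$, hence $A(x,y) \le \vec{t}'(y) - 1/(2k)$. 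Statement (3) follows by the same construction with a uniform threshold $t \in [k]$ rescaled to $t' := (2t-1)/(2k)$.

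For statement (2), I would take an arbitrary set $J \seq Y$ that is $P$-shattered by $A$ with thresholds $\vec{t}: J \ra \reals$. Since entries of $A$ are non-negative, any threshold $\vec{t}(y) \le 0$ would force the label $1$ at $y$ for every row, so all thresholds are strictly positive. Next, suppose for contradiction that $J$ met two distinct blocks $Y_{k_1}$ and $Y_{k_2}$ at points $y_1, y_2$. By definition of the merge, every row of $A$ is zero on at least one of $y_1, y_2$, so the all-ones pattern on $\{y_1,y_2\}$ cannot be realized, contradicting shattering. Hence $J \seq Y_k$ for a single $k$. Finally, rows $x \in X_{k'}$ with $k' \neq k$ are uniformly zero on $J$, playing the role of the extra zero-row in $\dot{A}_k$, while rows $x \in X_k$ give $A(x,\cdot) = A_k(x,\cdot)/k$; scaling $\vec{t}|_J$ by $k$ then converts the $P$-shattering of $J$ by $A$ into a $P$-shattering of $J$ by $\dot{A}_k$, so $|J| \le \PD(\dot{A}_k) \le d_0$. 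Statement (4) uses the identical argument with a single uniform threshold in place of $\vec{t}$.

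I expect the main obstacle to be the bookkeeping around the asymmetric strict/non-strict inequalities in the definition of $P_\gamma$-shattering: the scaled entries of $A$ are exactly $1/k$ apart, so the threshold $\vec{t}'(y)$ is forced to sit at the midpoint of two consecutive levels and the exact width $1/(2k)$ is attainable only because $A_k$ takes integer values, allowing the replacement of $A_k(x,y) < \vec{t}(y)$ by $A_k(x,y) \le \vec{t}(y)-1$. The block-separation step in (2) and (4), by contrast, is a direct consequence of the zero-padding structure built into the merge.
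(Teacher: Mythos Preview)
Your proposal is correct and follows essentially the same architecture as the paper's proof: lift shattered sets of $A_k$ into $A$ for parts (1) and (3) using the $1/k$-rescaling and integrality, and for parts (2) and (4) first force positive thresholds, then confine $J$ to a single block $Y_k$, then reduce to $\dot A_k$.

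One point worth noting: your argument that $J$ lies in a single $Y_k$ is actually cleaner than the paper's. The paper shows (via the first balance condition) that if $k(y_1)\neq k(y_2)$ then in every row \emph{exactly} one of the two thresholded entries is $0$, so column $y_2$ is the bitwise negation of column $y_1$, and then derives a contradiction from the \emph{second} balance condition. You instead observe directly that no row can satisfy both $A(x,y_1)>0$ and $A(x,y_2)>0$, so the pattern $(1,1)$ is never realized; this uses only the definition of shattering and avoids the detour through balance conditions. Either argument suffices, but yours is shorter and more transparent.
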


\begin{proof}
We only prove the first two assertions of the lemma; the other two assertions are quite similar. 

Note that, for $k=k(x)=k(y)$, $A$ coincides with $A_k$ except for
scaling down the values $0,1,\ldots,k$ by factor $k$.
Since $A_k$ takes integer values, each set that can be $P$-shattered 
by $A_k$ can actually be $P_{1/2}$-shattered. After down-scaling,
the width of shattering becomes $1/(2k)$. From these observations,
the first assertion of the lemma easily follows. 

We proceed with the proof of the second assertion.
Set $d := \PD(A)$. Fix some witness $(I,J,\vec{t})$ so that the
following holds:
\begin{enumerate}
\item $I \subset X$, $|I| = 2^d$, $J \subset Y$, $|J| = d$
and $\vec{t}: J \ra \nats$ assigns a threshold $t_y := \vec{t}(y)$ 
to every $y \in J$.
\item
The matrix $B$ obtained by thresholding the columns of $A_{I,J}$ 
at $\vec{t}$ equals $B_d$ (with rows indexed by $I$ and columns 
indexed by $J$). 
\end{enumerate}
It follows that $B$ satisfies the distinctness condition and the balance 
conditions.\\

\noindent \textbf{Claim 1:} For every $y \in J$, we have $t_y>0$.
\begin{proof}
$t_y\le 0$ would imply that column $y$ of $B$ has no 0-entry,
which is in contradiction with the first balance condition.
\end{proof}

\noindent \textbf{Claim 2:} The mapping $y \mapsto k(y)$ assigns the same value
to all $y \in J$.
\begin{proof}
Assume to the contrary that there exist $y_1,y_2 \in J$ such that $k(y_1) \neq k(y_2)$.
Then, for every row $x$ of $B$, at least one of the entries $B[x,y_1]$ and $B[x,y_2]$
equals $0$ (because $k(x)$ cannot be equal to both, $k(y_1)$ and $k(y_2)$).
By the first balance condition, any column in $B$ has as many 0- as 1-entries. 
Since this is particularly true for the columns $y_1$ and $y_2$,
it follows that, for every row $x$ of $B$, exactly one of the 
entries $B[x,y_1]$ and $B[x,y_2]$ equals $0$.
Thus column $y_2$ of $B$ is the entry-wise logical negation of the column $y_1$. 
This, however, is in contradiction with the second balance condition.
\end{proof}

\noindent \textbf{Claim 3:} Let $k_1$ denote the common $k$-value of $y_1,\ldots,y_{d}$.
Then any row $x$ in $B$ with $k(x) \neq k_1$ has 0-entries only.
\begin{proof}
This is straighforward.
\end{proof}

We conclude from Claims 2 and 3
that $\PD(A) = d \le \PD(\dot{A}_{k_1})$ and, by assumption,
the latter quantity is at most $d_0$, which concludes the proof.
\end{proof}

Theorem~\ref{th:fat-lb} is now a direct consequence of 
Lemma~\ref{lem:merge} in combination with Corollary~\ref{cor:pdim-lb},
while Theorem~\ref{th:dualfat-lb} is a direct consequence of 
Lemma~\ref{lem:merge} in combination with Lemmas~\ref{lem1:shatter-matrix-dual}
and~\ref{lem2:shatter-matrix-dual}. In order to prove Corollary \ref{cor:infty}, note that Lemma \ref{lem2:shatter-matrix-dual} tells us that for every $k$ there exists a matrix $A_k$ such that $\PD(\dot{A}_k) = 1$ and $\PD^*(\dot{A}_k) \geq \PD^*(A_k) = k+2$. We may then apply Lemma \ref{lem:merge}.

\paragraph{Acknowledgements.} The first author thanks Tim Roughgarden for discussions that (indirectly) lead to the questions studied in this work.

\bibliographystyle{plain}
\bibliography{references}

\newpage
\appendix
\section{On the derivation of Assouad's bound for uniform dimensions}\label{app:assouad}

We say that $J \subseteq Y$ is \emph{VC-shattered} by $A : X \times Y \rightarrow  \{0,1,*\}$ if, 
for every function $b: J \ra \{0,1\}$, there exists an $x \in X$ such that, 
for every $y \in J$, we have $B(x,y) = b(y)$.
We first note that~(\ref{eq:assouad}) is also valid for
every matrix of the form $A: X \times Y \ra \{0,1,*\}$:
the central observation in the proof is that $B_d$ 
contains $B^\top_{\lfloor \log d \rfloor}$ as a submatrix.
This implies that $\VCD(A) \ge \lfloor \log \VCD^*(A) \rfloor$,
which is equivalent to~(\ref{eq:assouad}).

Consider now a matrix of the general form $A: X \times Y \ra Z$ with $Z \seq \reals$.
Making use of the concept of uniform $\Psi$-dimensions from~\cite{BenDavid1995}, 
the result of Assouad can be extended to several other combinatorial dimensions.
Let $\Psi$ denote a family of substitutions of the form $\psi:\reals \ra \{0,1,*\}$.
Denote by $\psi(A)$ the matrix obtained from $A$ by performing the substitution $\psi$ 
entry-wise. The \emph{uniform $\Psi$-dimension} of $A$ is then defined as 
\[ \psidim_U(A) = \sup_{\psi\in\Psi}\VCD(\psi(A)) \enspace . \]
Let $\Psi_Y$ denote the set of all collections $\bar\psi = (\psi_y)_{y \in Y}$ 
with $\psi_y \in \Psi$. Denote by $\bar\psi(A)$ the matrix obtained from $A$ 
by replacing each entry $A(x,y)$ with $\psi_y(A(x,y))$. 
The \emph{(non-uniform) $\Psi$-dimension} of $A$ is defined as
\[ \psidim(A) = \sup_{\bar\psi\in\Psi_Y}\VCD(\bar\psi(A)) \enspace . \]
As usual, we get the corresponding dual dimensions 
by setting $\psidim^*(A) = \psidim(A^\top)$ and $\psidim^*_U(A) = \psidim_U(A^\top)$.
Note that $\psi(A^\top) = \psi(A)^\top$ while $\bar\psi(A^\top)$ is not generally equal
to $\bar\psi(A)^\top$.

As noted in~\cite{BenDavid1995}, several popular combinatorial dimensions 
can be viewed as (uniform or non-uniform) $\psi$-dimension. Here we are
particularly interested in the $P$-, $P_\gamma$, $V$-and $V_\gamma$-dimension:

\begin{remark} We next explain how to interpret known dimensions as special cases of the $\Psi$-dimension.
\begin{enumerate}
\item
If $\Psi$ is the set of mappings $\psi_t$ of the form $\psi_t(a) = \sgn(a-t)$
for some $t \in \reals$, then $\psidim(A) = \PD(A)$ 
and $\psidim_U(A) = \VD(A)$ (see~\cite{BenDavid1995}).
\item
If $\Psi$ is the set of mappings $\psi_t$ of the form 
\[
\psi_t(a) = \left\{ \begin{array}{ll}
              1 & \mbox{if $a \ge t+\gamma$} \\
              0 & \mbox{if $a < t-\gamma$} \\
              * & \mbox{otherwise}
            \end{array} \right.
\]
for some $t \in \reals$ and $\gamma > 0$, then $\psidim(A) = \Pfat{\gamma}(A)$
and $\psidim_U(A) = V_{\gamma}(A)$. 
\end{enumerate}
\end{remark}

The following calculation, with $\psi$ ranging over all functions in $\Psi$, 
shows that Theorem~\ref{th:assouad} can be extended to any uniform $\Psi$-dimension 
at the place of the VC-dimension:
\begin{eqnarray*}
\psidim_U^*(A) & = & \psidim_U(A^\top) = \sup_{\psi}\VCD(\psi(A^\top))
= \sup_{\psi}\VCD(\psi(A)^\top) \\
& = & \sup_{\psi}\VCD^*(\psi(A)) \le \sup_{\psi}\left(2^{\VCD(\psi(A))+1}-1\right) \\
& = & 2^{\sup_{\psi}\VCD(\psi(A))+1}-1 = 2^{\psidim_U(A)+1}-1 \enspace .
\end{eqnarray*}
We remark that a similar argument for the non-uniform $\Psi$-dimension fails as it then no longer holds that $\bar{\psi}(A^\top) = \bar{\psi}(A)^\top$ (which is the argument we use in the third equality above).

\end{document}